\newcommand{\cof}{\rightarrowtail}
\newcommand{\cab}[2]{
        \begin{picture}(20,30)(0,0)
        \put(-4,-5){\vector(0,-1){#1}}
        \put(-4,-3){\makebox(0,0){$\vee$}}
        \put(5,-13){\makebox(0,0){#2}}
        \end{picture}     }
\newtheorem{thm}{Theorem}[section]
\newtheorem{prop}[thm]{Proposition}
\newtheorem{cor}[thm]{Corollary}
\newtheorem{lem}[thm]{Lemma}
\newtheorem{dfn}[thm]{Definition}
\newtheorem{remark}[thm]{Remark}
\begin{document}

%\begin{frontmatter}

\title{Generalized Homotopy theory in Categories with a Natural
Cone}

\author{Francisco J. D\'{\i}az and Jos\'{e} M. G. Calcines }

\address{${}$ \newline Departamento de Matem\'{a}tica Fundamental, Universidad de La Laguna. \newline 38271 La Laguna.}

\thanks{This work has been supported by the Ministerio de Educaci\'{o}n y Ciencia grant MTM2009-12081 and FEDER}

\keywords{Category, algebraic homotopy theory, cone construction, generalized homotopy.}

\subjclass[2000]{55U35, 18C, 55P05.}

\begin{abstract}
In proper homotopy theory, the original concept of point used in
the classical homotopy theory of topological spaces is generalized
in order to obtain homotopy groups that study the infinite of the
spaces. This idea: ``Using any arbitrary object as base point''
and even ``any morphism as zero morphism'' can be developed in
most of the algebraic homotopy theories. In particular, categories with a natural cone have a generalized
homotopy theory obtained through the relative homotopy relation.
Generalized homotopy groups and exact sequences of them are built so
that respective classical pointed ones are a particular case of these.
\end{abstract}

\maketitle

%\end{frontmatter}

%{\footnotesize \it AMS Subject Class. (2000): Primary: .}

\section{Introduction}
The main problem to build a homotopy theory based on the concept
of cone object is to define the homotopy relation between
morphisms through the notion of nullhomotopic morphism. In
additive categories this problem can be solved using the addition
of morphisms in the category, and generalizing the injective
homotopy theory defined by Eckmann and Hilton \cite{Hi} for
$R$-modules. Algebraic theories in this sense were created by H.
Kleisli \cite{Kl} and J. A. Seebach \cite{S}. Also, S. Rodr\'{\i}guez-Mach\'{\i}n gives an algebraic
homotopy theory based on a cone functor in additive categories,
without using injective objects, that contains the injective
homotopy theories mentioned above as particular cases \cite{P-}. A
generalization to arbitrary categories has been given by S. Rodr\'{\i}guez-Mach\'{\i}n and the first 
author of this paper in \cite{D-}: Identifying the notion of dual
standard construction given by P. J. Huber \cite{Hu} with the
concept of cone, and adapting the axioms given by H. J. Baues
about cofibrations in Categories with a Natural Cylinder \cite{B}
to a cone object obtained by collapsing the base of the cylinder
to a single point. The algebraic homotopy theory in this way
defined attains that classical homotopy theory of the topological
spaces and pointed topological spaces can be developed using their
respective cones. Also the proper homotopy theory of the
topological spaces can be developed in this sense, through a cone
functor.

Finally, in categories with a natural cylinder in the sense of
Baues, the cone functor obtained by collapsing, using pushout
diagrams, the base of the cylinder functor to a single point gives
the same homotopy theory that the cylinder functor.

In order to study the infinite of topological spaces, the proper
homotopy theory defines several homotopy groups using different
spaces as base point. So, the Brown homotopy groups \cite{Br} can be
defined using a sequence of points a base point, and the Steenrod
homotopy groups \cite{C} use spaces based on a {\it base ray}. H.J. Baues
uses trees as base point of the spaces to create a proper homotopy
theory \cite{B2} \cite{B3}. These facts suggest a more general
concept of base point in homotopy theory. Moreover, in many theories
homotopy groups and exact sequences of them can be defined using
arbitrary objects and morphisms as base point and zero morphism,
respectively. These homotopy theories are called {\it generalized.}

A generalized homotopy theory can be defined in categories with a
natural cone using relative homotopy: Generalized homotopy groups are
homotopy groups relative to a cofibration based on a morphism. Also,
generalized exact homotopy sequences of these groups are given. In
this way, when the category is pointed, the classical homotopy theory
is a generalized homotopy theory based on a point.

Finally, fixed an object as base point in a category with a
natural cone, a generalization of the process used to obtain
spheres beginning with a point in topological spaces allows one to
define spheres beginning with the fixed object.

We point out that the main results of this paper were already announced 
in \cite{D-1}.

\section{Notation and preliminaries}
The following categorical notation will be useful in the
interpretation of this paper.

Given functors ${\bf B}\stackrel{E}{\to} {\bf
C}\stackrel{F,G}{\longrightarrow} {\bf D}\stackrel{H}{\to}{\bf E}$
and a transformation $t:F\to G$, then the transformations
$t*E:FE\to GE$ and $H*t:HF\to HG$ will be denoted by $t_E$ and
$Ht$, respectively. When there is not a possibility of confusion,
the morphism $t_X:FX\to GX$ will be simply denoted by $t$, for
every object $X$.

The pushout object of two morphisms $f$ and $g$ will be denoted by
$P\{ f,g\}$. The induced morphisms will be denoted by
$\overline{f}:codom\;g \to P\{ f,g\}$ and
$\overline{g}:codom\;f\to P\{ f,g\}$. Given a morphism $f$, if the
notation $\overline{f}$ has been used, $\widetilde{f}$ will denote
other induced morphism by $f$ in a pushout. In particular, if
$f=g$ then $\overline{f}$ and $\widetilde{f}$ will denote the
morphisms $\overline{f}$ and $\overline{g}$, respectively.

Given morphisms $r$ and $s$ verifying $rf=sg$, the unique morphism
$h$ such that $h\overline{g}=r$ and $h\overline{f}=s$ will be
denoted by $\{ r,s\}$. If $codom \; f$ (resp. $codom \; g$) is a
pushout object, the component $r$ (resp. $s$) has an expression
like $\{ r_1,r_2\}$ (resp. $\{ s_1,s_2\}$). Frequently, in this
case, the morphism $\{ r,s\} = \{\{ r_1,r_2\} ,s\}$ (resp. $\{
r,\{ s_1,s_2\}\}$) will be denoted by $\{ r_1,r_2,s\}$ (resp. $\{
r,s_1,s_2\}$). In this way, expressions of the type $\{
h_0,h_1,...,h_n\}$ can appear.

Given two pushout objects $P\{ f,g\}$ and $P\{ f',g'\}$, and three
morphisms $r:codom\hspace{4pt}f\to codom\hspace{4pt}f'$, $s:codom
\hspace{4pt}g\to codom\hspace{4pt}g'$ and $t:dom\hspace{4pt}f=
dom\hspace{4pt}g\to dom\hspace{4pt}f'=dom\hspace{4pt}g'$ verifying
$rf=f't$ and $sg=g't$, we will denote the unique morphism $\{
\overline{g'}r, \overline{f'}s\}$ by $r\cup s$. If there is not
possibility of confusion, expressions of the type $h_0\cup
h_1\cup... \cup h_n$ will be used.

Finally, the set of {\it extensions} of a morphism $u:B\to X$
relative to other morphism $i:B\to A$ is defined by
$Hom(A,X)^{u(i)}=\{ f:A\to X \; / \; fi=u\}$

Next we recall some concepts given in \cite{D-} relative to a category with a natural cone.

\begin{dfn}\label{ccat}
A category with a natural cone, or $C$-category, is a category
{\bf C} together with  a class ``cof'' of morphisms in {\bf C},
called cofibrations and denoted by $\cof$, a functor $C:{\bf C}\to
{\bf C}$ which will be called the cone functor, and natural
transformations $\kappa:1\to C$ and $\rho:CC\to C$, denominated
inclusion and projection respectively, satisfying the following
axioms:
\begin{enumerate}
 \item[{\small {\bf C1.}}] {\bf Cone axiom.} $\rho \kappa_C = \rho C\kappa = 1_C$ and
$\rho\rho_C=\rho C\rho$.

 \item[{\small {\bf C2.}}] {\bf Pushout axiom.}
     For any pair of morphisms $A\stackrel {i}{\leftarrowtail} B\stackrel{f}{\to}
     X$, where $i$ is a cofibration, there exists the pushout square
     $$
     \xymatrix{
     {B} \ar[rr]^{f} \ar@{ >->}[d]_{i} & & {X} \ar@{ >->}[d]^{\overline{i}}  \\
     {A} \ar[rr]_{\overline{f}} & & {P\{ i,f\}}}
     $$
     and $\overline{i}$ is also a cofibration.
     The cone functor carries this pushout diagram (called cofibrated pushout) into a
     pushout diagram, that is $C(P\{ i,f\}) = P\{ Ci,Cf\}$.

 \item[{\small {\bf C3.}}] {\bf Cofibration axiom.} For each object $X$ the morphisms
     $1_X$ and $\kappa_X$ are cofibrations. The composition of
     two cofibrations is a cofibration. Moreover, there is a retraction for the cone of
     each cofibration. This last property is called
     nullhomotopy extension property (NEP).

 \item[{\small {\bf C4.}}] {\bf Relative cone axiom.} Given a cofibration $i:B\cof A$,
       the morphism $i_1=\{ Ci,\kappa \}:\Sigma^i=P\{ \kappa,i\}\cof CA$
       is also a cofibration. The object $\Sigma^i$ is called relative cone of
       $i$.
 \end{enumerate}
\end{dfn}

Note that isomorphisms and cones of cofibrations are also cofibrations.

Given a cofibration $i$, for each non-negative integer $n$ one
defines $i_n=(i_{n-1})_1$, with $i_0 =i$.

\begin{thm}\label{cubo}
Given the commutative cubical diagram $$\xymatrix{
 & {X} \ar[rr]^{g} \ar@{ >->}[dd]^<(.2){\gamma}
  \ar[dl]_{f} & & {Z} \ar@{ >->}[dd]^{\beta} \ar[dl]_{\overline{f}} \\
 {\mbox{\small Y}} \ar[rr]_{\overline{g}} \ar@{ >->}[dd]_{\alpha}
 & & {P\{ f,g\}} \ar@{ >->}[dd]^<(.2){\alpha\cup\beta} &                 \\
 & {X'} \ar[rr]^<(.2){g'} \ar[dl]_{f'} & & {Z'} \ar[dl]_{\overline{f'}}  \\
 {Y'} \ar[rr]_{\overline{g'}} & & {P\{f',g'\}} & }
$$ where the top and bottom faces are pushouts and
$\alpha, \beta, \gamma$ are cofibrations. If $\{g',\beta \}:{\rm
P}\{\gamma ,g\}\to {\rm Z'}$ or $\{f',\alpha\}:{\rm P}\{\gamma
,f\}\to {\rm Y'}$ is a cofibration then so is $\alpha\cup\beta$.
\end{thm}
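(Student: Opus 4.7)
By the symmetry of the statement under the swap $(Y,f,\alpha)\leftrightarrow(Z,g,\beta)$, it suffices to treat the case where $\{g',\beta\}:P\{\gamma,g\}\to Z'$ is a cofibration; the other case is obtained by interchanging roles. My plan is to factor $\alpha\cup\beta$ as a composite of two cofibrations through the intermediate pushout object $Q:=P\{f'\gamma,g\}$.

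I would first construct $Q$ in two complementary ways. Applying axiom C2 to the cospan $Y'\stackrel{\alpha}{\leftarrowtail}Y\to P\{f,g\}$, where the right-hand arrow is the induced morphism $\overline{g}$ and $\alpha$ is a cofibration, produces $Q$ together with a cofibration $P\{f,g\}\cof Q$; a check on the two generators $\overline{f}:Z\to P\{f,g\}$ and $\overline{g}:Y\to P\{f,g\}$ identifies this cofibration with the induced morphism $\alpha\cup 1_Z$ coming from the sub-cube with vertical maps $(\alpha,1_X,1_Z)$. Alternatively, $Q$ is the pushout of $Y'\stackrel{f'}{\leftarrow}X'\cof P\{\gamma,g\}$, where the right arrow is the induced morphism $X'\to P\{\gamma,g\}$, which is a cofibration by C2 applied to $\gamma$. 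Associativity of pushouts supplies the identification $Y'\sqcup_{X'}(X'\sqcup_{X}Z)\cong Y'\sqcup_{X}Z$, so both constructions yield the same object.

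Using the second description, I would then apply C2 again, this time to the hypothesised cofibration $\{g',\beta\}:P\{\gamma,g\}\cof Z'$, obtaining the pushout $Q\sqcup_{P\{\gamma,g\}}Z'$ together with a cofibration $Q\cof Q\sqcup_{P\{\gamma,g\}}Z'$. Another associativity argument, combined with the identity $g'=\{g',\beta\}\,\overline{g}$ read off from the definition of $\{g',\beta\}$, identifies the target of this new cofibration with $Y'\sqcup_{X'}Z'=P\{f',g'\}$ and the cofibration itself with the induced morphism $1_{Y'}\cup\beta$ coming from the sub-cube with vertical maps $(1_{Y'},\gamma,\beta)$. Composing the two cofibrations and invoking axiom C3 produces a cofibration $P\{f,g\}\cof P\{f',g'\}$; a direct check on $\overline{f}$ and $\overline{g}$ shows this composite equals $\alpha\cup\beta$.

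The main obstacle will be the bookkeeping: verifying that both descriptions of $Q$ agree, that $Q\sqcup_{P\{\gamma,g\}}Z'\cong P\{f',g'\}$, and that the composite of the two intermediate cofibrations really is $\alpha\cup\beta$ and not some other morphism induced on the top pushout. Each verification reduces to a direct use of the universal property of pushouts, so no additional structure beyond axioms C2 and C3 (and the given cofibrations $\alpha$, $\gamma$, and $\{g',\beta\}$) is invoked.
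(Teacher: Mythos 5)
Your proposal is correct, and the route it takes---factoring $\alpha\cup\beta$ through the mixed pushout $Q\cong Y'\sqcup_{X}Z$ as a cobase change of the cofibration $\alpha$ followed by a cobase change of the hypothesised cofibration $\{g',\beta\}$, then invoking C2 and C3---is the standard argument for cube statements of this kind. Note that the present paper states Theorem~\ref{cubo} without proof, recalling it from \cite{D-}, so there is no in-text proof to compare against; your factorization is the expected one.

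One assertion needs repair. Applying C2 to the cospan $X'\stackrel{\gamma}{\leftarrowtail}X\stackrel{g}{\to}Z$ makes the induced morphism $Z\cof P\{\gamma,g\}$ a cofibration; the morphism you declare to be a cofibration, namely the induced morphism $X'\to P\{\gamma,g\}$, is the cobase change of $g$ along $\gamma$ and need not be a cofibration at all. The slip is harmless, because its only role in your argument is to guarantee the existence of the pushout $Y'\sqcup_{X'}P\{\gamma,g\}$, and that existence you already have: the object $Q$ constructed first as the pushout of $Y'\stackrel{\alpha}{\leftarrowtail}Y\to P\{f,g\}$ (which C2 does provide, since $\alpha$ is a cofibration) satisfies the universal property of $Y'\sqcup_{X'}P\{\gamma,g\}$ by the pasting/cancellation law for pushout squares, which is exactly the ``associativity'' you invoke; no cofibrancy of $X'\to P\{\gamma,g\}$ enters anywhere. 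With that sentence amended, the remaining steps are sound: C2 applied to $\{g',\beta\}$ gives the cofibration $Q\cof Q\sqcup_{P\{\gamma,g\}}Z'$, the identity $g'=\{g',\beta\}\,\overline{g}$ and another pasting identify this target with $P\{f',g'\}$ and the map with $1_{Y'}\cup\beta$, the composite is a cofibration by C3 and agrees with $\alpha\cup\beta$ on the generators coming from $Y$ and $Z$, and the case where $\{f',\alpha\}$ is a cofibration follows by the symmetry you describe.
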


\begin{dfn}
A morphism $f:X\to Y$ is said to be nullhomotopic ($f\simeq 0$) if
there exists an extension $F:CX\to Y$ of the morphism $f$ relative
to the cofibration $\kappa_X$. $F$ is called a nullhomotopy for
$f$ $(F:f\simeq 0)$.

An object X is said to be contractible ($X\simeq 0)$ when $1_{\rm
X}\simeq 0$.
\end{dfn}

So a morphism is nullhomotopic if and only if it may be factored
through a contractible object. Hence the nullhomotopy extension
property can be also stated in terms of nullhomotopy:

\begin{thm}[NEP]
Given a morphism $i:{\rm B}\to {\rm A}$, the following sentences
are equivalent:
\begin{enumerate}
\item[a)] The morphism $i$ verifies NEP.
\item[b)] Every nullhomotopic morphism $f:B\to X$ has a nullhomotopic
          extension rel. $i$.
\item[c)] Every nullhomotopic morphism $f:B\to X$ has an extension
          rel. $i$.
\item[d)] The inclusion $\kappa :B\to CB$ has an extension rel. $i$.
\end{enumerate}
\end{thm}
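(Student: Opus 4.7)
The plan is to establish the cycle of implications (a) $\Rightarrow$ (d) $\Rightarrow$ (b) $\Rightarrow$ (c) $\Rightarrow$ (a). Two preliminary observations underpin the whole argument, both immediate consequences of axiom C1. First, the inclusion $\kappa_B:B\to CB$ is nullhomotopic, with $1_{CB}$ serving as a nullhomotopy. Second, $CB$ is contractible: the equality $\rho_B\kappa_{CB}=1_{CB}$ from C1 exhibits $\rho_B$ as a nullhomotopy of $1_{CB}$, so every morphism factoring through $CB$ is nullhomotopic (compose any such factorisation with $\rho_B$ and the cone of the first factor).

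For (a) $\Rightarrow$ (d), take the retraction $r:CA\to CB$ of $Ci$ furnished by NEP and set $G:=r\kappa_A:A\to CB$. Naturality of $\kappa$ gives $Gi=r\kappa_A i=r\,Ci\,\kappa_B=\kappa_B$. For (d) $\Rightarrow$ (b), given a nullhomotopic $f:B\to X$ with nullhomotopy $F:CB\to X$ and an extension $G:A\to CB$ of $\kappa_B$ along $i$, the composite $FG:A\to X$ satisfies $FGi=F\kappa_B=f$, so it is an extension; it factors through the contractible object $CB$, hence is itself nullhomotopic. The implication (b) $\Rightarrow$ (c) is trivial, since a nullhomotopic extension is in particular an extension.

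The closing step (c) $\Rightarrow$ (a) is the one that requires a small idea: manufacturing a retraction of $Ci$ out of a mere extension. Apply (c) to the nullhomotopic morphism $\kappa_B:B\to CB$ to obtain $G:A\to CB$ with $Gi=\kappa_B$, and set $r:=\rho_B\cdot CG:CA\to CB$. Functoriality gives $r\,Ci=\rho_B\cdot C(Gi)=\rho_B\cdot C\kappa_B$, which is $1_{CB}$ by axiom C1. This yields the required retraction and closes the cycle. The main obstacle I anticipate is simply keeping straight what NEP asserts for a general morphism (a retraction of $Ci$) versus what (d) gives (an extension of $\kappa_B$); the passage between the two is precisely the application of $C$ followed by $\rho_B$.
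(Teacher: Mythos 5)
Your proof is correct. Note that the paper itself states this theorem without proof (it is recalled from the reference [D-]), so there is no in-text argument to compare against; but your cycle (a)$\Rightarrow$(d)$\Rightarrow$(b)$\Rightarrow$(c)$\Rightarrow$(a) is exactly the natural argument suggested by the surrounding text: the two preliminary facts you isolate (that $\kappa_B$ is nullhomotopic via $1_{CB}$, and that $CB$ is contractible via $\rho_B\kappa_{CB}=1_{CB}$, so anything factoring through $CB$ is nullhomotopic) are the content of the paper's remark that a morphism is nullhomotopic iff it factors through a contractible object. Each step checks out: (a)$\Rightarrow$(d) uses naturality of $\kappa$ ($\kappa_A i=(Ci)\kappa_B$); (d)$\Rightarrow$(b) uses the contractibility of $CB$ to make the extension $FG$ nullhomotopic; and (c)$\Rightarrow$(a) correctly converts the extension $G$ of $\kappa_B$ into the retraction $\rho_B\,CG$ of $Ci$ via the identity $\rho\,C\kappa=1_C$ from axiom C1.
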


\begin{dfn}
A cofibration $i:B\cof A$ is said to be contractible when $B$ and $A$ are
contractible objects. These cofibrations will be the contractible
objects in the category of pairs.
\end{dfn}

Observe that pushout objects of two contractible cofibrations are
contractible objects. Hence if $i$ is contractible then $i_n$ and
$\Sigma^{i_{n-1}}$ so are, for each natural number $n$.

\begin{dfn}
Given a cofibration $i:B\cof CA$ and two morphisms
$f_{0},f_{1}:CA\to X$, $f_{0}$ is said to be homotopic to $f_{1}$
relative to the cofibration $i$ ($f_0\simeq f_1$ rel. $i$) if
there exists an extension $F:C^2A\to X$ of the morphism $\{
f_{0}\rho Ci,f_{1}\}$ relative to the cofibration $i_1$. $F$ is
called homotopy from $f_{0}$ to $f_{1}$ relative to $i$, in
symbols $F:f_{0}\simeq f_{1}$ rel. $i$.
\end{dfn}

\begin{remark}\label{hom}
The homotopy relation relative to a cofibration $i$ is an
equivalence relation compatible with the composition of morphisms
in the following sense:
\begin{enumerate}
\item[-] If $F:f_0\simeq f_1$ rel. $i$ then $fF:ff_0\simeq ff_1$ rel.
         $i$.
\item[-] Given a commutative square $(Cf)i=jg$, if $F:f_0\simeq
         f_1$ rel. $j$ then $FC^{2}f:f_{0}Cf\simeq f_{1}Cf$ rel. $i$.
         If the commutative square is a pushout then $f_0\simeq
         f_1$ rel. $j$ if and only if $f_{0}Cf\simeq f_{1}Cf$
         rel. $i$.
\end{enumerate}
\end{remark}

The quotient set $Hom(CA,X)^{u(i)}/\hspace{-4pt}\simeq$ will be denoted by
$[CA,X]^{u(i)}$, where $u=f_0i=f_1i$.

The following property is fundamental to obtain to obtain equalities
among morphisms save homotopy.

\begin{thm}\label{cont}
If $X$ or $i$ is contractible then $f_0\simeq f_1$ rel. $i$ if and
only if $f_0i=f_1i$.
\end{thm}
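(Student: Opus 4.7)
The forward direction requires no contractibility: by construction, the target of the extension, $\{f_0\rho Ci,f_1\}:\Sigma^i\to X$, is a morphism out of the pushout $\Sigma^i=P\{\kappa_B,i\}$, so its two components must agree on $B$. Using the cone axiom \textbf{C1} ($\rho\kappa_C=1_C$) and the naturality of $\kappa$ (giving $\kappa_{CA}i=Ci\,\kappa_B$), one computes $f_0\rho Ci\cdot\kappa_B=f_0\rho\kappa_{CA}\,i=f_0i$, which must equal $f_1i$. So $f_0\simeq f_1$ rel.\ $i$ forces $f_0i=f_1i$ in all cases; I would dispatch this direction in one line.

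All the work is in the converse. Assume $f_0i=f_1i$, so the morphism $g:=\{f_0\rho Ci,f_1\}:\Sigma^i\to X$ is well-defined. By axiom \textbf{C4} the morphism $i_1:\Sigma^i\cof C^2A$ is a cofibration, hence enjoys the NEP by \textbf{C3}. Invoking the NEP theorem (part (c)), it is enough to prove that $g$ is nullhomotopic; any extension of $g$ relative to $i_1$ then \emph{is} by definition a homotopy $F:f_0\simeq f_1$ rel.\ $i$. So the problem reduces to showing $g\simeq 0$ under each contractibility hypothesis.

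For the case when $X$ is contractible, I would fix a nullhomotopy $H:CX\to X$ of $1_X$ (so $H\kappa_X=1_X$); then for the arbitrary morphism $g:\Sigma^i\to X$, the composite $HCg:C\Sigma^i\to X$ satisfies $HCg\cdot\kappa_{\Sigma^i}=H\kappa_X g=g$ by naturality of $\kappa$, exhibiting a nullhomotopy of $g$. For the case when $i$ is contractible, $B$ is contractible by definition of a contractible cofibration, and $CB$ is automatically contractible since $\rho_B\kappa_{CB}=1_{CB}$ by \textbf{C1}; hence $\kappa_B:B\cof CB$ is a contractible cofibration. Pushing it out against the contractible cofibration $i$ yields, by the remark following the definition of contractible cofibration, that $\Sigma^i=P\{\kappa_B,i\}$ is a contractible object. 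Then the morphism $g:\Sigma^i\to X$ is nullhomotopic: using any nullhomotopy $K:C\Sigma^i\to\Sigma^i$ of $1_{\Sigma^i}$, the composite $gK$ is a nullhomotopy of $g$.

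The argument is almost entirely bookkeeping; the only mild subtlety is recognizing that the whole statement is a clean application of NEP to $i_1$, so what must be verified in each case is not the construction of $F$ by hand but merely the nullhomotopy of a single auxiliary morphism $\{f_0\rho Ci,f_1\}$. The main thing to be careful about is tracking which object ($X$, $B$, $CB$, $CA$, $\Sigma^i$) is contractible for which reason, so that the appropriate factorization through a contractible object is available in each case.
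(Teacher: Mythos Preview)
The paper does not supply a proof of this theorem; it is one of the results recalled without proof from \cite{D-} in the preliminaries section. Your argument is correct and is the natural one: the forward direction is immediate from the well-definedness of $\{f_0\rho Ci,f_1\}$, and for the converse you correctly reduce to showing this morphism is nullhomotopic and then invoke NEP for the cofibration $i_1$. Your handling of the case ``$i$ contractible'' agrees with the paper's own remark, immediately following the definition of contractible cofibration, that $\Sigma^{i}$ is contractible whenever $i$ is.
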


\begin{dfn}
A $C$-category is said to be pointed if every object $X$ is
cofibrant (that is, the initial morphism $\emptyset_X:\emptyset\to
X$ is a cofibration) and $C\emptyset = \emptyset$. In pointed
categories the initial object is denoted by $*$ and it is called
point.
\end{dfn}

If $X$ and $Y$ are objects of a pointed $C$-category, then $X\vee
Y$ will denote the pushout object $P\{*_X, *_Y\}$. Note that
$C(X\vee Y) = CX\vee CY$. If $X\simeq 0$ and $Y\simeq 0$ then
$X\vee Y\simeq 0$. If $i:B\cof A$ and $i':B'\cof A'$ are
cofibrations then $i\vee i':B\vee B'\to A\vee A'$ so is.

\begin{remark}\label{punt}
$[CA\vee CA',X]^{\{ u,u'\}(i\vee i')}\cong
[CA,X]^{u(i)}\times [CA',X]^{u'(i')}$
\end{remark}

\section{Generalized Homotopy Groups}
In this section the homotopy groupoid relative to a cofibration
$i:B\cof CA$ of an object X, ${\bf H_i(X)}$, is built in order to
define the first homotopy group, $\pi_1^i(X,h)$, relative to a
cofibration $i$ based on a morphism $h:CA\to X$. Then higher
homotopy groups are defined as first homotopy groups relative to
iterated cofibrations:
$\pi_n^i(X,h)=\pi_1^{i_{n-1}}(X,h\rho^{n-1})$.

Finally, main properties about the functorial character of
these groups, and their relation with coproducts and contractible
objects or cofibrations are studied.

The following commutative square is fundamental to obtain the
groupoid ${\bf H_i(X)}$:
$$\xymatrix{
  {\Sigma^i} \ar[rr]^{\rho(Ci)\cup 1} \ar@{ >->}[d]_{i_1} & & {P\{i,i\}} \ar@{ >->}[d]^{\kappa}  \\
  {C^2A} \ar[rr]_{\mu} & & {CP\{ i,i\}}}$$

$\mu$ is an extension of the morphism $\kappa\rho (Ci)\cup \kappa$
relative to the cofibration $i_1$.

Given $f_0,f_1\in Hom(CA,X)$, $H_i(f_0,f_1)$ will denote the
homotopy bracket $[C^2A,X]^{\{f_0\rho Ci,f_1\}(i_1)}$. If $F,G\in
Hom(C^2A, X)^{\{f_0\rho Ci,f_1\}(i_1)}$, then $\overline{F}$ and
$F*G$ will denote the morphisms $\{ F,f_0\rho\}\mu$ and
$\{\overline{F},G\}\mu$, respectively.

\begin{lem}\label{mu}
$\mu^*:[P\{ Ci,Ci\} ,X]^{\{ f_0,f_1\} (\kappa)}\to H_i(f_0,f_1)$
is a bijection.
\end{lem}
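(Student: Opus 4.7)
The plan is to show $\mu^{*}$ is well-defined on homotopy classes and then establish bijectivity, using the commutativity of the $\mu$-square together with the pushout axiom C2 and the nullhomotopy extension property.

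First, for well-definedness, given $h\in Hom(CP\{i,i\},X)^{\{f_{0},f_{1}\}(\kappa)}$, the commutativity of the $\mu$-square gives
\[
h\mu\cdot i_{1}=h\kappa(\rho(Ci)\cup 1)=\{f_{0},f_{1}\}(\rho(Ci)\cup 1)=\{f_{0}\rho Ci,f_{1}\},
\]
so $h\mu\in Hom(C^{2}A,X)^{\{f_{0}\rho Ci,f_{1}\}(i_{1})}$. Homotopy invariance follows by transporting a homotopy $H:h_{0}\simeq h_{1}$ rel.\ $\kappa$ across the $\mu$-square, using Remark \ref{hom} and the naturality of $\kappa$.

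For bijectivity, I construct an inverse as follows. Given $F\in Hom(C^{2}A,X)^{\{f_{0}\rho Ci,f_{1}\}(i_{1})}$, the morphisms $F$ and $\{f_{0},f_{1}\}$ agree on $\Sigma^{i}$ (each composite equals $\{f_{0}\rho Ci,f_{1}\}$), so by axiom C2 they induce a unique $g:P\{i_{1},\rho(Ci)\cup 1\}\to X$. The commutativity of the $\mu$-square also produces a comparison morphism $\{\kappa,\mu\}:P\{i_{1},\rho(Ci)\cup 1\}\to CP\{i,i\}$, and the required preimage $h$ is then obtained as an extension of $g$ along $\{\kappa,\mu\}$. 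Injectivity works analogously: a homotopy $G:h_{0}\mu\simeq h_{1}\mu$ rel.\ $i_{1}$ combines with the constant homotopy of $\{f_{0},f_{1}\}$ (well-defined since $h_{0}\kappa=h_{1}\kappa$) via the same pushout, and extends along $\{\kappa,\mu\}$ to yield a homotopy $h_{0}\simeq h_{1}$ rel.\ $\kappa$.

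The principal obstacle is to verify that $\{\kappa,\mu\}$ is a cofibration with the nullhomotopy extension property, so that the required extensions in both directions exist. I would establish this via the cube theorem (Theorem \ref{cubo}) applied to a cube whose top face is the defining pushout of $P\{i_{1},\rho(Ci)\cup 1\}$ and whose bottom face identifies $CP\{i,i\}$ with $P\{Ci,Ci\}$ by axiom C2, thereby reducing cofibrancy of $\{\kappa,\mu\}$ to that of simpler morphisms built from $i$, $\kappa$ and $\rho$.
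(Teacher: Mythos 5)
Your set-level computation that $h\mu\in Hom(C^{2}A,X)^{\{f_{0}\rho Ci,f_{1}\}(i_{1})}$ is correct, but the heart of the lemma is exactly where your argument breaks down: the construction of the inverse. You propose to find a strict preimage $h$ with $h\{\kappa,\mu\}=g$, that is $h\kappa=\{f_{0},f_{1}\}$ and $h\mu=F$, by ``extending $g$ along $\{\kappa,\mu\}$'' once $\{\kappa,\mu\}$ is known to be a cofibration with NEP. This extension step is not available in a $C$-category: NEP only yields extensions along a cofibration of \emph{nullhomotopic} morphisms (equivalently, extensions of $\kappa$; see items (b)--(d) of the NEP theorem), and your $g:P\{i_{1},\rho(Ci)\cup 1\}\to X$ lands in an arbitrary $X$ and has no reason to be nullhomotopic. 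If cofibrations admitted extensions of arbitrary morphisms, then in particular every morphism would extend over its cone along $\kappa$ and hence be nullhomotopic, collapsing the whole theory. Moreover the strict factorization $F=h\mu$ you aim for is in general false: $\mu$ is merely \emph{some} extension of $\kappa\rho(Ci)\cup\kappa$ rel.\ $i_{1}$ produced by NEP, and an arbitrary $F$ with $Fi_{1}=\{f_{0}\rho Ci,f_{1}\}$ need not factor through it on the nose; the statement only holds up to homotopy. This is precisely how the paper proceeds: it defines $(\mu^{*})^{-1}([F])=[\{f_{0}\rho,F\}]$ (legitimate because $FCi=f_{0}\rho Ci$ and $F\kappa=f_{1}$) and proves $\mu^{*}(\mu^{*})^{-1}=1$ by the explicit homotopy $\{f_{0}\rho^{2},F\rho\}C\mu: F\simeq\{f_{0}\rho,F\}\mu$ rel.\ $i_{1}$, with a similar explicit homotopy for the other composite; no strict lifting is ever produced. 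Your injectivity argument suffers from the same defect. In addition, Theorem \ref{cubo} does not deliver cofibrancy of $\{\kappa,\mu\}$: that theorem produces cofibrations of the form $\alpha\cup\beta$ between two compatible pushout squares with $\alpha,\beta,\gamma$ cofibrations, whereas $\{\kappa,\mu\}$ is a morphism of curly-brace type out of a pushout, $\mu$ is not a cofibration, and $CP\{i,i\}=P\{Ci,Ci\}$ is not presented as the bottom face of any such cube in a way compatible with $\mu$.

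There is also a smaller gap in your well-definedness step. Remark \ref{hom} only governs precomposition with morphisms of the form $C^{2}f$ coming from a square $(Cf)i=jg$, and $\mu$ is not the cone of any morphism, so ``transporting the homotopy across the $\mu$-square'' is not covered by it. The paper instead constructs an auxiliary extension $\nu$ of $\{(C\kappa)\mu\rho(Ci_{1}),\kappa\mu\}$ relative to $i_{2}$ --- a legitimate use of NEP, since this morphism lands in the cone $C^{2}P\{i,i\}$ and is therefore nullhomotopic --- and checks directly that $F\nu:F_{0}\mu\simeq F_{1}\mu$ rel.\ $i_{1}$. Your proof needs explicit constructions of this kind at each step; as written, both bijectivity and well-definedness rest on extensions that the axioms do not provide.
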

\begin{proof}

$(\mu^{*})^{-1}:H_i(f_0,f_1)\to [P\{ Ci,Ci\} ,X]^{\{ f_0,f_1\}
(\kappa)}$ is defined by $(\mu^{*})^{-1}([F])=[\{f_{0}\rho,F\}]$.
\begin{enumerate}

\item[-] {\it $\mu^{*}$ is well defined.} If $F:F_{0}\simeq F_{1}$
rel. $\kappa$ then $F\nu :F_{0}\mu \simeq F_{1}\mu$ rel. $i_{1}$,
where $\nu$ is an extension of the morphism $\{ (C\kappa )\mu \rho
(Ci_{1}),\kappa\mu\}$ relative to $i_{2}$.

\item[-] {\it $(\mu^{*})^{-1}$ is well defined.}
If $F:F_{0}\simeq F_{1}$ rel. $i_{1}$, then $\{ f_{0}\rho^2,F\}:
\{ f_{0}\rho,F_{0}\} \simeq\{ f_{0}\rho,F_{1}\}$ rel. $\kappa$.

\item[-] {\it $(\mu^{*})^{-1}\mu^{*}=1.$} $\{ FC\rho,\{ F\rho,G\rho\} C\mu\} :
\{ F,G\}\simeq \{f_{0}\rho,\{ F,G\}\mu \}$ rel. $\kappa$.

\item[-] {\it $\mu^{*}(\mu^{*})^{-1}=1.$} $\{ f_{0}\rho^2,F\rho\} C\mu :
F\simeq \{f_{0}\rho,F\}\mu$ rel. $i_{1}$.
\end{enumerate}
\end{proof}
\begin{thm}

${\bf H_i(X)}$ is a groupoid, with objects $Hom(CA,X)$; morphisms
from $f_0$ to $f_1$, $H_i(f_0,f_1)$; identities $1_f=[f\rho]$;
inverse morphisms $[F]^{-1}=[\overline{F}]$; and composite
morphisms $[F].[G] = [F*G]$.
\end{thm}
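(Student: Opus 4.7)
I would reduce each groupoid axiom to a manipulation on the symmetric side $[P\{Ci,Ci\},X]^{\{f_0,f_1\}(\kappa)}$ using the bijection $\mu^*$ of Lemma~\ref{mu}. A class on this side is represented by a pair $(h_0,h_1)$ of morphisms $C^2A\to X$ with $h_j\kappa=f_j$ and $h_0Ci=h_1Ci$. Under $\mu^*$ the identity $1_f=[f\rho]$ corresponds to the diagonal pair $(f\rho,f\rho)$, the inverse $\overline{F}$ to the swap of the canonical representative $(f_0\rho,F)$, and the composite $F*G$ to the pair $(\overline{F},G)$. In this picture the groupoid axioms become geometrically transparent.

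\textbf{Steps.} First, well-definedness of $[\overline{F}]$ and $[F*G]$ on homotopy classes reduces via $\mu^*$ to the claim that the swap and pasting operations preserve homotopy rel $\kappa$ on the symmetric side. Next, for the identity laws, the diagonal pair is fixed by the swap, yielding $\overline{f_0\rho}\simeq f_0\rho$ rel $i_1$; then
$$[f_0\rho]\cdot[F]=[\{\overline{f_0\rho},F\}\mu]=[\{f_0\rho,F\}\mu]=[F],$$
the final equality being the roundtrip identity $\mu^*(\mu^*)^{-1}=1$ of Lemma~\ref{mu}. The inverse law $[F]\cdot[\overline{F}]=[f_0\rho]$ follows by using the equivalent representative $(F,f_0\rho)$ for $\overline{F}$, which transforms the pair $(\overline{F},\overline{F})$ governing $F*\overline{F}$ into the diagonal $(f_0\rho,f_0\rho)=1_{f_0}$; the reversed law is symmetric. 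Associativity follows by a similar pair-level comparison.

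\textbf{Main obstacle.} The principal technical work is in establishing well-definedness. The swap is handled by precomposition with the canonical involution of $P\{Ci,Ci\}$ that interchanges the two copies of $C^2A$, using the pushout-compatibility clause of Remark~\ref{hom} together with the naturality relation $\kappa\,\tau' = C\tau'\,\kappa$. The pasting case requires producing explicit homotopies rel $\kappa$ on $P\{Ci,Ci\}$ from given homotopies rel $i_1$ on $C^2A$; each such homotopy is constructed by extending a suitable morphism along the iterated cofibration $i_2:\Sigma^{i_1}\cof C^3A$ into a cone object, where NEP applies because iterates of a cofibration remain cofibrations (relative cone axiom) and any morphism into a cone is nullhomotopic (cones being contractible). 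This parallels the $\nu$-construction in the proof of Lemma~\ref{mu}. Once well-definedness is secured, the remaining axioms are formal consequences of the two roundtrip identities of Lemma~\ref{mu} together with the manifest symmetry of $P\{Ci,Ci\}$.
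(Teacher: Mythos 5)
Your overall strategy -- transporting everything through the bijection $\mu^*$ of Lemma~\ref{mu} to the symmetric side $[P\{Ci,Ci\},X]^{\{f_0,f_1\}(\kappa)}$ and exploiting the symmetry of the pushout there -- is exactly the paper's strategy, and your treatment of well-definedness of the inverse (via the swap involution and Remark~\ref{hom}) and of the left identity law is sound. But two of the axioms are not actually proved by what you wrote. First, the inverse law: the pair governing $F*\overline{F}$ is $(\overline{F},\overline{F})$, and "using the equivalent representative $(F,f_0\rho)$ for $\overline{F}$" does not turn it into the diagonal $(f_0\rho,f_0\rho)$ -- the roundtrip identities only tell you $[\{f_1\rho,\overline{F}\}]=[\{F,f_0\rho\}]$, which says nothing about the doubled pair, and you certainly do not have $\overline{F}\simeq f_0\rho$ rel $i_1$ (that would kill the groupoid). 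What is actually needed, and what the paper supplies, is the explicit homotopy $\{\overline{F}C\rho,\overline{F}C\rho\}:\{\overline{F},\overline{F}\}\simeq\{f_0\rho,f_0\rho\}$ rel $\kappa$; the underlying fact is that any extension $H$ of $f_0$ rel $\kappa_{CA}$ satisfies $HC\rho:H\simeq H\kappa\rho$ rel $\kappa_{CA}$, by the cone axiom $\rho\kappa_C=\rho C\kappa=1$ and naturality. This construction, or an equivalent one, must appear; it is not a formal consequence of Lemma~\ref{mu} plus symmetry.

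Second, and more seriously, associativity does not follow "by a similar pair-level comparison." The pairs governing $(F*G)*H$ and $F*(G*H)$ are $(\overline{F*G},H)$ and $(\overline{F},G*H)$, which are not related by any symmetry of $P\{Ci,Ci\}$. The paper needs two further explicit homotopies -- one showing $[\overline{G}*\overline{F}]=[\overline{F*G}]$ and one showing $[F*G]=[(F*H)*(\overline{H}*G)]$ -- followed by the non-obvious rearrangement $[(F*G)*H]=[(F*G)*(\overline{G}*(G*H))]=[F*(G*H)]$. Your sketch does not engage with this at all, and it is the bulk of the work. A smaller point: your description of the "pasting" step of well-definedness (extending along $i_2$ into a cone object via NEP) describes the wrong direction; that is the $\nu$-construction used inside Lemma~\ref{mu}, where the codomain is a cone. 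For pasting, the codomain is the arbitrary object $X$, and the correct (and much simpler) mechanism is that two homotopies $F:F_0\simeq F_1$ and $G:G_0\simeq G_1$ rel $i_1$ glue directly to $\{F,G\}:\{F_0,G_0\}\simeq\{F_1,G_1\}$ rel $\kappa$ on the pushout; no NEP extension is involved.
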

\begin{proof}
{\it Remark 1:} If $F:F_0\simeq F_1$ rel. $i_1$ and $G:G_0\simeq
G_1$ rel. $i_1$ then $\{ F,G\} :\{F_0,G_0\}\simeq \{ F_1,G_1\}$
rel. $\kappa$.

\begin{enumerate}
\item[-] {\it Inverse morphisms are well defined:} If $[F_0]=[F_1]
\in H_i(f_0,f_1)$, by {\it Remark 1} $\{ F_0,f_0\rho\}\simeq \{
F_1,f_0\rho\}$ rel. $\kappa$. By Lemma \ref{mu}
$[\overline{F_0}]=[\overline{F_1}]$ in $H_i(f_1,f_0)$.

\item[-] {\it Composite morphisms are well defined:} If $[F_0]=[F_1]
\in H_i(f_0,f_1)$ then $[\overline{F_0}]=[\overline{F_1}]$.
Moreover, if $[G_0]=[G_1] \in H_i(f_1,f_2)$, by {\it Remark 1} $\{
\overline{F_0}, G_0\}\simeq \{\overline{F_1},G_1 \}$ rel.
$\kappa$. By Lemma \ref{mu} $[F_0*G_0]=[F_1*G_1]\in H_i(f_0,f_2)$.
\end{enumerate}

{\it Remark 2:} By lemma \ref{mu} and the definition of inverse
morphism, $[f\rho]^{-1}=[f\rho]$ for every $f:CA\to X$.

\begin{enumerate}
\item[-] {\it Left homotopy identity property:} If $[F]\in
H_i(f_0,f_1)$, by {\it Remarks 1, 2} and Lemma \ref{mu}
$[f_0\rho*F]=[F]$.

\item[-] {\it Right homotopy identity property:} If $[F]\in
H_i(f_0,f_1)$, then $\{\{ F\rho,f_0\rho^2\}C\mu ,FC\rho\}: \{
f_0\rho,F\}\simeq \{\{ F,f_0\rho\}\mu ,f_1\rho\}$ rel. $\kappa$.
By Lemma \ref{mu} $[F] = [F*f_1\rho]$.
\end{enumerate}

{\it Remark 3:} $[\overline{\overline{F}}] = [F]$, for every
$[F]\in H_i(f_0,f_1)$, since $\overline{\overline{F}}=F*f_1\rho$.

\begin{enumerate}
\item[-] {\it Right homotopy inverse property:} If $[F]\in
H_i(f_0,f_1)$, then $\{ \overline{F}C\rho,\overline{F}C\rho\}:
\{\overline{F} ,\overline{F}\}\simeq \{f_0\rho,f_0\rho\}$ rel.
$\kappa$. By {\it Remark 2} and Lemma \ref{mu} $[F*\overline{F}] =
[f_0\rho]$.

\item[-] {\it Left homotopy inverse property:} Since the composition
of morphisms is well defined, by {\it Remark 3} and the right
homotopy inverse property  $[\overline{F}*F] =
[\overline{F}].[\overline{\overline{F}}] = [f_1\rho]$.
\end{enumerate}

{\it Remark 4:} If $[F]\in H_i(f_0,f_1)$ and $[G]\in H_i(f_1,f_2)$
then $\{\{\overline{F}\rho,G\rho\}C\mu , \overline{F}C\rho\}
:\{G,\overline{F}\}\simeq \{\{\overline{F},G\}\mu ,f_0\rho\}$ rel.
$\kappa$. By {\it Remarks 1, 3} and Lemma \ref{mu}
$[\overline{G}*\overline{F}] = [\overline{F*G}]$.

{\it Remark 5:} If $[F]\in H_i(f_0,f_1)$, $[G]\in H_i(f_1,f_2)$
and $[H]\in H_i(f_1,f_3)$ then $\{\{H\rho,\overline{F}\rho\}C\mu ,
\{ H\rho,G\rho\}C\mu\} :\{\overline{F},G\}\simeq
\{\{H,\overline{F}\}\mu ,\{ H,G\}\mu\}$ rel. $\kappa$. By {\it
Remarks 1, 3, 4} and Lemma \ref{mu} $[F*G] =
[(F*H)*(\overline{H}*G)]$.

\begin{enumerate}
\item[-] {\it Homotopy associative property:} If $[F]\in
H_i(f_0,f_1)$ , $[G]\in H_i(f_1,f_2)$ and $[H]\in H_i(f_1,f_3)$,
by the left homotopy identity property and {\it Remark 5} $H =
[f_0\rho*H] = [(f_0\rho*\overline{G})*(G*H)] =
[\overline{G}*(G*H)]$. So $[(F*G)*H] =
[(F*G)*(\overline{G}*(G*H))] = [F*(G*H)]$ since the composition of
morphisms is well defined and by {\it Remark 5}.

\end{enumerate}

\end{proof}

%\begin{eqnarray*}
%H & \stackrel{\mbox{\footnotesize
%    (Left homotopy identity property)}}{=} & [f_0p*H] =  \\
%  & \stackrel{\mbox{\it \footnotesize
%    (Remark 5)}}{=}                        & [(f_0p*\overline{G})*(G*H)] \\
%  & \stackrel{\mbox{\footnotesize
%    (Left homotopy identity property)}}{=} & [\overline{G}*(G*H)]
%\end{eqnarray*}

\begin{dfn}
The n-th homotopy group relative to a cofibration $i:B\cof CA$ of an
object $X$ based on a morphism $h:CA\to X$ is
$$\pi_n^i(X,h)= H_{i_{n-1}}(h\rho^{n-1},h\rho^{n-1}),\hspace{6pt}n\in{\Bbb N}$$

Given a cofibration $i:B\cof A$ and a morphism $h:CA\to X$, since
$\pi_n^i(X,h) = \pi_{n-s}^{i_s}(X,h\rho^s)$ the (n+1)-th homotopy
group relative to a cofibration $i:B\cof A$ of an object $X$ based
on a morphism $h:CA\to X$ is
$$\pi_{n+1}^i(X,h)=\pi_n^{i_1}(X,h)$$
\end{dfn}

Observe that $\pi_n^i(X,h)=[C^nA,X]^{h\rho^ni_n(i_n)}$.

By Theorem \ref{cont}, if $X$ or $i$ are contractible then $\pi_n^i(X,h) =
\{ 0\}$ for all $n$.

The compatibility of the homotopy relation with the composition of
morphisms (Remark \ref{hom}) gives functorial character to the
homotopy groups:

\begin{prop}\label{pifiz}
If $f:X\to Y$ is a morphism then $f_*:\pi_n^i(X,h)\to
\pi_n^i(Y,fh)$ is a homomorphism of groups.
\end{prop}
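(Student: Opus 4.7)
The plan is to set $f_*([F]) = [fF]$ and verify three things in turn: that $fF$ satisfies the boundary condition for an element of $\pi_n^i(Y,fh)$, that the assignment descends to homotopy classes, and that it respects the groupoid composition.

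For the first point, a representative of an element of $\pi_n^i(X,h) = H_{i_{n-1}}(h\rho^{n-1}, h\rho^{n-1})$ is a morphism $F:C^nA\to X$ whose restriction $Fi_n$ equals the pushout bracket $\{h\rho^{n-1}\rho\,Ci_{n-1},\, h\rho^{n-1}\}$. Since post-composition with $f$ commutes with the bracket $\{-,-\}$ by the uniqueness clause of the universal property of the pushout, one obtains $(fF)i_n = \{fh\rho^{n-1}\rho\,Ci_{n-1},\, fh\rho^{n-1}\}$, which is exactly the boundary condition for an element of $\pi_n^i(Y,fh)$. For the second point, if $H:F_0\simeq F_1$ rel.\ $i_n$, then Remark \ref{hom} yields $fH:fF_0\simeq fF_1$ rel.\ $i_n$, so $[fF_0]=[fF_1]$ and $f_*$ is well defined on classes.

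For the homomorphism property, recall that $[F].[G]=[\{\overline{F},G\}\mu]$ with $\overline{F}=\{F,h\rho^{n-1}\rho\}\mu$, and that the auxiliary morphisms $\mu$, $\rho$, $Ci_{n-1}$ and the cofibration $i_n$ are built purely from $i$ and $A$, so they are independent of $X$, $Y$, $h$ and $f$. The same distributivity of post-composition over the pushout bracket then gives $f\overline{F} = \{fF,fh\rho^{n-1}\rho\}\mu = \overline{fF}$ (now taken with $fh$ as base morphism), and by a second application $f(F*G) = \{\overline{fF},fG\}\mu = (fF)*(fG)$. Passing to classes yields $f_*([F].[G]) = f_*([F]).f_*([G])$.

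The only real obstacle is bookkeeping: one must keep track of the matching copies of $\mu$, $\rho^j$ and $Ci_{n-1}$ on either side of each identity. This is painless, however, because each of these morphisms lives entirely on the source side and is untouched by post-composition with $f$; so the argument requires no new structural ingredient beyond the universal property of pushouts and the compositional compatibility already recorded in Remark \ref{hom}.
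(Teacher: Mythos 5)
Your proposal is correct and follows essentially the same route as the paper, whose entire proof is the one-line identity $f_*([F].[G])=f_*([F*G])=[(fF)*(fG)]$; you have simply filled in the details (boundary condition, well-definedness via Remark \ref{hom}, and the distributivity $f\{r,s\}=\{fr,fs\}$ giving $f\overline{F}=\overline{fF}$ and $f(F*G)=(fF)*(fG)$, using that $\mu$ depends only on $i$). No discrepancy to report.
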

\begin{proof}
It is clearly seen that $f_*([F].[G])=f_*([F*G])=[(fF)*(fG)]$.

\end{proof}

\begin{prop}\label{pifder}
If $fi=gj$ is a is a commutative square relating cofibrations $i$
and $j$ then $(C^n f)^*:\pi_n^j(X,h)\to \pi_n^i(X,h(Cf))$ is a
homomorphism of groups.
\end{prop}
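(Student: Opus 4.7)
The plan is to define $(C^n f)^*[F] := [F\circ C^{n+1}f]$ by precomposition and verify three properties in turn: set-theoretic well-definedness, invariance under the homotopy relation rel.\ $i_n$, and compatibility with the group operation.

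The first two properties follow from standard naturality. The commutative square $fi = gj$, combined with the functoriality of $C$ and the naturality of $\kappa$ and $\rho$, produces by induction on $k$ induced morphisms $\varphi_k : \Sigma^{i_{k-1}} \to \Sigma^{j_{k-1}}$ satisfying $C^{k+1}f\cdot i_k = j_k\cdot\varphi_k$. Hence $F\circ C^{n+1}f$ restricts correctly on $i_n$ to qualify as a representative in $\pi_n^i(X, h(Cf))$, and the second bullet of Remark \ref{hom} converts homotopies rel.\ $j_n$ into homotopies rel.\ $i_n$.

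By the reduction $\pi_{n+1}^i = \pi_n^{i_1}$, the homomorphism property reduces to the case $n = 1$. Letting $\mu_i, \mu_j$ denote the chosen extensions defining the compositions in $\mathbf{H_i}(X)$ and $\mathbf{H_j}(X)$, the two classes to compare are
\[
(Cf)^*([F]\cdot[G]) = [\{\bar F, G\}\mu_j\cdot C^2 f]
\quad\text{versus}\quad
(Cf)^*[F]\cdot(Cf)^*[G] = [\{\overline{F\cdot C^2 f},\,G\cdot C^2 f\}\mu_i].
\]
The universal property applied to $fi=gj$ produces a morphism $\psi : P\{i,i\}\to P\{j,j\}$ satisfying $\psi\cdot(\rho Ci\cup 1) = (\rho Cj\cup 1)\cdot\varphi_1$ and $\{\bar F, G\}\cdot C\psi = \{\bar F\cdot C^2 f,\,G\cdot C^2 f\}$. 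Both $\mu_j\cdot C^2 f$ and $C\psi\cdot\mu_i$ extend the common morphism $\kappa_{P\{j,j\}}\cdot\psi\cdot(\rho Ci\cup 1)$ relative to $i_1$. Since the inverse formula $[F]\mapsto[\{f_0\rho, F\}]$ from the proof of Lemma \ref{mu} is independent of the choice of $\mu$, any two such extensions produce homotopic classes $[\{r,s\}\mu]$ rel.\ $i_1$. Applying this to convert $\bar F\cdot C^2 f$ into $\overline{F\cdot C^2 f}$ identifies the two sides, concluding the proof.

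The main obstacle is this $\mu$-naturality step: the extensions $\mu_j\cdot C^2 f$ and $C\psi\cdot\mu_i$ are not literally equal but only homotopic rel.\ $i_1$. That this suffices reflects the already-established independence of the groupoid composition from the choice of $\mu$, a fact embedded in Lemma \ref{mu}; once that is in hand, the remaining manipulations are formal consequences of the universal property of pushouts, naturality of $\kappa$ and $\rho$, and the compatibility statements of Remark \ref{hom}.
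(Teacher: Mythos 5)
Your overall strategy coincides with the paper's: everything reduces to showing that $\mu_{j}\circ C^{2}f$ and $C\psi\circ\mu_{i}$ (in the paper's indexing, $\mu_{j_n}(C^nf)$ and $(C^nf\cup C^nf)\mu_{i_n}$) become identified after passing to homotopy classes rel.\ the cofibration, and you correctly isolate this as the crux. The gap is in how you justify it. You only establish that the two morphisms agree after precomposition with $i_1$, and then appeal to ``independence of the choice of $\mu$'' via the inverse formula of Lemma \ref{mu}. But Lemma \ref{mu} concerns extensions of $\kappa\rho (Ci)\cup\kappa$ with codomain $CP\{ i,i\}$; neither $\mu_j\circ C^2f$ nor $C\psi\circ\mu_i$ is such an extension (their common codomain is $CP\{ j,j\}$, and they extend a different morphism), so the lemma as stated does not apply to them and the asserted homotopy rel.\ $i_1$ is not yet established by what you have written.

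The missing ingredient is Theorem \ref{cont}: the common codomain $CP\{ j,j\}$ is a cone, hence contractible (the projection $\rho$ is a nullhomotopy of the identity of any cone), so two morphisms into it that agree after composition with the cofibration $i_1$ are automatically homotopic rel.\ $i_1$. This is exactly the paper's one-line argument: $\mu_{j_n}(C^nf)i_n = (C^nf\cup C^nf)\mu_{i_n}i_n$ together with Theorem \ref{cont} gives $\mu_{j_n}(C^nf)\simeq (C^nf\cup C^nf)\mu_{i_n}$ rel.\ $i_n$, with no reduction to $n=1$ needed. Once that homotopy is in hand, your remaining steps --- postcomposing with $\{\overline{F},G\}$ and converting $\overline{F}\, C^2f$ into $\overline{FC^2f}$ --- do go through by Remark \ref{hom}, Lemma \ref{mu} and the groupoid machinery, as you indicate; so the proof is repaired by replacing the appeal to Lemma \ref{mu} at the critical step with an appeal to Theorem \ref{cont}.
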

\begin{proof}
$(C^n f)^*([F].[G])=(C^n f)^*([F*G])=[(F*G)C^n f]=[(FC^n f)*(GC^n
f)]$ since $\mu_{j_n}(C^nf)i_n = (C^nf\cup C^nf)\mu_{i_n}i_n$, and
by Theorem \ref{cont} $\mu_{j_n}(C^nf)\simeq (C^nf\cup
C^nf)\mu_{i_n}$ rel. $i_n$.

\end{proof}

\begin{cor}
If $fi=gj$ is a pushout diagram then $(C^n f)^*$ is an isomorphism
of groups.
\end{cor}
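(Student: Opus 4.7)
The plan is to establish bijectivity of the already-known homomorphism $(C^n f)^*$ by propagating the pushout hypothesis up through the relative-cone construction, and then applying Remark \ref{hom} (the pushout case) for injectivity and the pushout's universal property for surjectivity. By Proposition \ref{pifder}, $(C^n f)^*$ is a homomorphism, so only bijectivity remains to be shown.

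The preliminary step is to show, inductively on $n$, that for every $n\geq 0$ there is an induced morphism $g^{(n)}\colon \Sigma^{i_{n-1}}\to \Sigma^{j_{n-1}}$ (with the conventions $i_0=i$, $j_0=j$, $g^{(0)}=g$) making the level-$n$ square $(C^n f)\,i_n = j_n\,g^{(n)}$ a cofibrated pushout. The base case is the hypothesis of the corollary. For the inductive step I would first apply Axiom C2 to pass the cone functor through the level-$n$ pushout, obtaining a cone-level pushout, and then invoke Theorem \ref{cubo} on the cube whose top and bottom faces are the pushout squares defining $\Sigma^{i_n}=P\{\kappa,i_n\}$ and $\Sigma^{j_n}=P\{\kappa,j_n\}$; naturality of $\kappa$ supplies the commutativity of the four side faces, and Theorem \ref{cubo} yields the required fourth vertical arrow $g^{(n+1)}$ as a cofibration.

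Granted the level-$n$ pushout, injectivity of $(C^n f)^*$ is immediate from Remark \ref{hom}: if $F_0\,C^n f\simeq F_1\,C^n f$ rel.\ $i_n$, then $F_0\simeq F_1$ rel.\ $j_n$. For surjectivity, given $[G]\in\pi_n^i(X,h(Cf))$ satisfying the standard restriction condition $G\,i_n=\{h(Cf)\,\rho^{n-1}\,Ci_{n-1},\,h(Cf)\,\rho^{n-1}\}$, I would define the canonical base-point datum $V=\{h\,\rho^{n-1}\,Cj_{n-1},\,h\,\rho^{n-1}\}\colon\Sigma^{j_{n-1}}\to X$ and assemble $F=\{V,G\}\colon C^n A'\to X$ via the universal property of the level-$n$ pushout. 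The compatibility $V\,g^{(n)}=G\,i_n$ required for this assembly reduces to the original pushout relation $(Cf)\,i=j\,g$ together with naturality of $\rho$; the resulting $F$ satisfies $F\,C^n f=G$ and $F\,j_n=V$, so $(C^n f)^*[F]=[G]$.

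The main obstacle is the inductive bookkeeping in the preliminary step: the morphisms $g^{(n)}$ arise as iterated $\cup$-combinations of previously induced maps, and one must carefully verify the commutativity of every face of each cube before invoking Theorem \ref{cubo}. Beyond that verification, bijectivity is a formal consequence of the pushout's universal property and Remark \ref{hom}.
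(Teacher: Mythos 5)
Your overall plan --- propagate the pushout hypothesis to a level-$n$ square $(C^nf)i_n=j_ng^{(n)}$, then read off injectivity from the pushout clause of Remark \ref{hom} and surjectivity from the universal property of that pushout --- is exactly the argument the paper leaves implicit (the corollary is stated with no proof), and your injectivity and surjectivity steps are sound. The genuine problem is the justification of the inductive step. Theorem \ref{cubo} does not do what you need there, for two reasons. First, its hypotheses fail: the three given vertical arrows of your cube are $g^{(n)}$, $Cg^{(n)}$ and $C^{n}f$, and none of these is a cofibration in general, since in the corollary only $i$ and $j$ are cofibrations while $f$ and $g$ are arbitrary legs of the pushout; the auxiliary hypothesis that $\{g',\beta\}$ or $\{f',\alpha\}$ be a cofibration is likewise unavailable. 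Second, and more importantly, its conclusion is not the one you need: Theorem \ref{cubo} asserts that the induced arrow $\alpha\cup\beta$ is a \emph{cofibration}, whereas your induction requires that the new square $(C^{n+1}f)i_{n+1}=j_{n+1}g^{(n+1)}$ is again a \emph{pushout}. Cofibrancy of $g^{(n+1)}$ is neither needed nor true in general, and the mere existence of $g^{(n+1)}=Cg^{(n)}\cup C^{n}f$ already follows from the universal property of $\Sigma^{i_n}=P\{\kappa,i_n\}$ with no theorem required. As written, the inductive step therefore does not establish the statement being inducted on.

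The gap is repairable by a routine pasting argument in place of the appeal to Theorem \ref{cubo}. By axiom C2 the cone functor preserves the cofibrated pushout $C^{n}A'=P\{i_n,g^{(n)}\}$, so $C^{n+1}A'=P\{Ci_n,Cg^{(n)}\}$ with induced morphisms $C^{n+1}f$ and $Cj_n$. Given $u:C^{n+1}A\to X$ and $v:\Sigma^{j_n}\to X$ with $ui_{n+1}=vg^{(n+1)}$, write $v=\{v_1,v_2\}$ on $\Sigma^{j_n}=P\{\kappa,j_n\}$; comparing components, the compatibility condition says precisely that $uCi_n=v_1Cg^{(n)}$ and $u\kappa_{C^{n}A}=v_2C^{n}f$. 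The first equality produces $w=\{v_1,u\}:C^{n+1}A'\to X$ with $wC^{n+1}f=u$ and $wCj_n=v_1$, and one checks $w\kappa_{C^{n}A'}=v_2$ by testing against the two legs of $C^{n}A'=P\{i_n,g^{(n)}\}$, using naturality of $\kappa$ and the second equality; uniqueness is clear since $w$ is determined on both components of $P\{Ci_n,Cg^{(n)}\}$. This shows the level-$(n+1)$ square is again a cofibrated pushout and completes your induction; the remainder of your argument then goes through as stated.
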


In pointed categories with a natural cone, the relation between
coproducts and products in homotopy groups is a consequence of Remark
\ref{punt}:

\begin{prop}
$\pi_n^{i\vee i'}(X,\{ h,h'\} ) \cong \pi_n^{i}(X,h)\vee
\pi_n^{i'}(X,h')$.
\end{prop}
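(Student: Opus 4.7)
The plan is to reduce the statement to an iterated application of Remark \ref{punt}, checking along the way that the resulting set-level bijection is also a homomorphism of groups. The isomorphism symbol on the right should be read as direct product of groups, matching the $\times$ that appears in Remark \ref{punt}.

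First I would establish, by induction on $n$, that under the canonical identification $C^n(A\vee A')=C^nA\vee C^nA'$ one has $(i\vee i')_n=i_n\vee i'_n$. The base case is trivial. For the inductive step, observe that in a pointed $C$-category pushouts distribute over $\vee$, so
$$\Sigma^{i\vee i'}=P\{\kappa_{B\vee B'},\,i\vee i'\}\;=\;P\{\kappa_B,i\}\vee P\{\kappa_{B'},i'\}\;=\;\Sigma^{i}\vee\Sigma^{i'},$$
and the relative cone map decomposes as $\{C(i\vee i'),\kappa\}=\{Ci,\kappa\}\vee\{Ci',\kappa\}=i_1\vee i'_1$. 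Iterating gives the claim.

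Next, using this identification together with naturality of $\rho$ and the fact that $\{-,-\}$ respects $\vee$, the base morphism in $\pi_n^{i\vee i'}(X,\{h,h'\})$ splits as $\{h\rho^n i_n,\,h'\rho^n i'_n\}$. Applying Remark \ref{punt} with $i_n,i'_n$ in place of $i,i'$ and $u=h\rho^n i_n$, $u'=h'\rho^n i'_n$, yields a set bijection
$$\pi_n^{i\vee i'}(X,\{h,h'\})\;\cong\;\pi_n^i(X,h)\times\pi_n^{i'}(X,h'),$$
concretely given by $[F]\mapsto\bigl([F\,C^n\iota_A],[F\,C^n\iota_{A'}]\bigr)$ where $\iota_A\colon A\to A\vee A'$ and $\iota_{A'}\colon A'\to A\vee A'$ are the coproduct inclusions.

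Finally, I would upgrade this to a group isomorphism by invoking Proposition \ref{pifder}. The square $\iota_A\,i=(i\vee i')\,\iota_B$ (and its primed analogue) consists of cofibrations, so $(C^n\iota_A)^*\colon\pi_n^{i\vee i'}(X,\{h,h'\})\to\pi_n^i(X,\{h,h'\}C\iota_A)=\pi_n^i(X,h)$ is a homomorphism, and likewise for $\iota_{A'}$. Their combined map is precisely the bijection produced above, so that bijection is a group isomorphism. The only real obstacle is the bookkeeping in the first step, namely verifying that all the natural transformations and pushouts used in building $(i\vee i')_n$ and $\mu_{(i\vee i')_{n-1}}$ genuinely decompose under $\vee$; once that compatibility is in place, the group structure transports automatically through Proposition \ref{pifder}.
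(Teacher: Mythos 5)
Your argument is correct, and its first half is exactly the paper's: the authors also prove $\Sigma^{i\vee i'}=P\{\kappa,i\vee i'\}=P\{\kappa,i\}\vee P\{\kappa,i'\}$ and conclude $(i\vee i')_n=i_n\vee i'_n$, then feed this into Remark \ref{punt}. Where you diverge is in how the group structures are matched. The paper observes that $\mu_{i_n}\vee\mu_{i'_n}$ is a legitimate choice of the extension $\mu_{(i\vee i')_n}$ (the operation being independent of that choice), which immediately yields the explicit componentwise law $[\{F,F'\}].[\{G,G'\}]=[\{F*G,F'*G'\}]$ and hence the isomorphism. You instead transport the group structure through Proposition \ref{pifder}: the squares $\iota_A i=(i\vee i')\iota_B$ and $\iota_{A'}i'=(i\vee i')\iota_{B'}$ give homomorphisms $(C^n\iota_A)^*$ and $(C^n\iota_{A'})^*$, whose pairing is the bijection of Remark \ref{punt}, so that bijection is an isomorphism onto the direct product. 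Both routes are sound; your reading of the $\vee$ on the right-hand side as the direct product is the intended one, since Remark \ref{punt} produces $\times$. Your version lets you treat the $\mu$-bookkeeping as a black box inside Proposition \ref{pifder}, at the mild cost of having to identify the Remark \ref{punt} bijection explicitly as precomposition with the coproduct inclusions (true, since a morphism out of $C^nA\vee C^nA'$ is determined by its restrictions, but worth stating, as the remark itself does not name the map). The paper's version is marginally more self-contained and more informative, since it exhibits the multiplication of $\pi_n^{i\vee i'}(X,\{h,h'\})$ as being computed componentwise, rather than deducing the isomorphism abstractly.
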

\begin{proof}
Observe that $P\{ \kappa,i\vee i'\} = P\{ \kappa\vee \kappa,i\vee
i'\} = P\{ \kappa,i\}\vee P\{ \kappa,i'\}$, where $\overline{i\vee
i'} = \overline{i}\vee \overline{i'}$ and $\overline{\kappa} =
\overline{\kappa\vee \kappa} = \overline{\kappa}\vee
\overline{\kappa}$. Hence $(i\vee i')_n = i_n\vee i'_n$, and so
that $\mu_{i_n}\vee \mu_{i'_n}$ is an extension of the type
$\mu_{(i\vee i')_n}$. Therefore $[\{ F,F'\} ].[\{ G,G'\} ] = [\{
F,F'\} *\{ G,G'\} ] = [\{ F*G,F'*G'\} ].$

\end{proof}

Classical homotopy groups in pointed categories with a natural cone
are generalized homotopy groups in the following sense:

$\pi_n^A(X)\cong \pi_n^{*_A}(X,0)$ and $\pi_n^i(X)\cong \pi_n^i(X,0)$.

In pointed categories $\pi_n^{*_A}(X,h)$ will be also denoted by
$\pi_n^A(X,h)$.

\section{Generalized Exact Homotopy Sequences}

In \cite{D-} a pair $(X,Y)$ is defined as a cofibration $f:Y\cof X$. A
morphism from $(X,Y)$ to $(X',Y')$ is a pair of morphisms $(g,h)$
such that $gf=f'h$. Cofibrations of pairs are the morphisms
$(u,v):(X,Y)\to (X',Y')$ with $v$ and $\{ f',u\} :P\{ v,f\}\to X'$
cofibrations. In this way the category of pairs {\bf cof C} of a
category with a natural cone {\bf C} is also a $C$-category. Therefore,
concepts and results obtained in the previous section are also
available in {\bf cof C}.

On the other hand, homotopy groupoids of {\bf cof C} are related with
respective ones of {\bf C} in the following sense:
\begin{enumerate}
\item[a)] If $(f_0,g_0)\simeq (f_1,g_1)$ rel. $(u,v)$ then $f_0\simeq
          f_1$ rel. $u$ and $g_0\simeq g_1$ rel. $v$.
\item[b)] $[(F_0,G_0)].[(F_1,G_1)] = [(F_0,G_0)*(F_1,G_1)] =
          [(F_0*F_1,G_0*G_1)]$.
\end{enumerate}

In this way every generalized homotopy group in {\bf C} can be also
seen as a generalized homotopy group in {\bf cof C}:

\begin{prop}\label{theta}
There is an isomorphism of groups
$$\theta_n:\pi_{n+2}^i(X,h)\to \pi_{n+1}^{(i_1,1)}((X,Y),(fh\rho ,h))$$
for all pair $(X,Y)$.
\end{prop}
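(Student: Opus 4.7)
My plan is to define $\theta_n$ on representatives by $F \mapsto (F, h\rho^n)$, exploiting the componentwise structure of the $C$-category $\mathbf{cof}\ C$ to reduce pair data to first-component data.

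First I would iterate the cofibration $(i_1,1)$ in $\mathbf{cof}\ C$. Since cones, pushouts, $\kappa$ and $\rho$ are all componentwise in the pair category, and $\Sigma^{1_X}$ is canonically $CX$ with $(1_X)_1 \cong 1_{CX}$ (because $1_X$ is an isomorphism), induction yields $(i_1,1)_k = (i_{k+1}, 1_{C^k A})$ as a cofibration of pairs $(\Sigma^{i_k}, C^k A) \cof (C^{k+1}A, C^k A)$. Hence a representative of $\pi_{n+1}^{(i_1,1)}((X,Y),(fh\rho,h))$ is a pair morphism $(F,G) : (C^{n+2}A, C^{n+1}A) \to (X,Y)$ extending the base morphism relative to $(i_{n+2}, 1_{C^{n+1}A})$. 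The second-component extension, being relative to an identity cofibration, forces $G = h\rho^n$ via the identification $\Sigma^{1_{C^nA}} \cong C^{n+1}A$ and the cone axiom $\rho\kappa = 1$; the first-component extension is exactly the boundary condition $F \cdot i_{n+2} = \{fh\rho^{n+1}Ci_{n+1}, fh\rho^n\}$ defining elements of $\pi_{n+2}^i(X, h) = \pi_{n+1}^{i_1}(X,h) = H_{i_{n+1}}(h\rho^n, h\rho^n)$.

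Setting $\theta_n([F]) := [(F, h\rho^n)]$, I would verify well-definedness. The pair compatibility $fh\rho^n = F \cdot C^{n+1}\kappa_A$ (with $C^{n+1}\kappa_A$ the pair cofibration of the source pair $C^{n+1}(CA, A)$) follows by a diagram chase using the pushout presentation $i_{n+2} = \{Ci_{n+1}, \kappa_{C^{n+1}A}\}$, the cone axioms $\rho\kappa_C = \rho C\kappa = 1$, and the first-component boundary satisfied by $F$. For homotopy well-definedness, a homotopy $\Phi : F_0 \simeq F_1$ rel $i_{n+2}$ lifts to a pair homotopy $(\Phi, h\rho^{n+1})$ rel $(i_{n+2}, 1_{C^{n+2}A})$ by the same argument applied one dimension up, and forgetting the second component gives the inverse bijection on homotopy classes.

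The group homomorphism property then follows from point (b) at the start of Section 4 on componentwise composition of pair homotopies: $(F_0, h\rho^n) * (F_1, h\rho^n) = (F_0 * F_1, h\rho^n * h\rho^n)$, and $h\rho^n * h\rho^n \simeq h\rho^n$ rel $1_{C^{n+1}A}$ by Theorem \ref{cont} applied to the identity cofibration. Hence $\theta_n([F_0] \cdot [F_1]) = [(F_0 * F_1, h\rho^n)] = \theta_n([F_0]) \cdot \theta_n([F_1])$.

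The main obstacle is the pair-compatibility verification: the morphisms $C^{n+1}\kappa_A$ and $\kappa_{C^{n+1}A}$ are a priori distinct cofibrations $C^{n+1}A \to C^{n+2}A$, and reconciling their compositions with any $F$ satisfying the first-component boundary requires careful use of the cone-axiom identity $\rho\kappa = 1$ together with a pushout argument through $\Sigma^{i_{n+1}}$.
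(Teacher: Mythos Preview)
Your approach is the same as the paper's, which gives only a three-line argument: observe that the boundary condition $(F,G)(i_{n+2},1_{n+2})=(\{fh\rho^{n+1}Ci_{n+1},fh\rho^n\},h\rho^n)$ forces $G=h\rho^n$, whence $F\leftrightarrow[(F,h\rho^n)]$ is simply declared an isomorphism of groups. Your flagged obstacle is real but resolves exactly as you suggest: $C^{n+1}\kappa_A$ factors through $i_{n+2}$ via $\overline{i_{n+1}}\,C\overline{i_n}\cdots C^n\overline{i_1}\,C^{n+1}\overline{\kappa}$, so $FC^{n+1}\kappa_A=fh\rho^{n+1}C^{n+1}\kappa_A=fh\rho^n$ and $(F,h\rho^n)$ is indeed a pair morphism.
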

\begin{proof}
If $[(F,G)]\in \pi_{n+1}^{(i_1,1)}((X,Y),(fh\rho ,h))$ then
$(F,G)(i_{n+2},1_{n+2})= (Fi_{n+2},G) \linebreak =(\{
fh\rho^{n+1}Ci_{n+1},fh\rho^n\}, h\rho^{n})$, so $G=h\rho^n$.
Therefore $F \leftrightarrow [(F,h\rho^n)]$ is an isomorphism of
groups.

\end{proof}

\begin{dfn}
The $(n+2)$-th homotopy group relative to the cofibration $i:B\cof A$
of the pair $(X,Y)$ based on the morphism $h:CA\to Y$ is
$$\pi_{n+2}^i((X,Y),h)=\pi_{n+1}^{(Ci,i)}((X,Y),(fh\rho ,h)), \hspace{7pt}
(n\in {\Bbb N})$$
\end{dfn}

Observe that $(Ci,i):(CB,B)\cof (CA,A)$ is a cofibration since $\{
Ci,\kappa\} = i_1$.

Next the exact homotopy sequence relative to a cofibration $i:B\cof
A$ associated to a pair $(X,Y)$ based on a morphism $h:CA\to Y$ is
given:

\begin{thm}\label{sucex}
The following sequence of groups is exact: $$
.....\to\pi_{3}^i(Y,h)\stackrel{f_*}{\rightarrow}\pi_{3}^{i}(X,fh)
\stackrel{j_1}{\rightarrow}\pi_3^i((X,Y),h)\stackrel{\delta_1}{\rightarrow}
\pi_{2}^i(Y,h)\stackrel{f_*}{\rightarrow}\pi_{2}^i(X,fh)$$ where
$f_*$ is the homomorphism of groups used in Proposition
\ref{pifiz}; $j_n=(1,1)^*\theta_n$, with $\theta_n$ the isomorphism of
groups given in Proposition \ref{theta} and $(1,1)^*$ is defined
using the proposition \ref{pifder} and the commutative square
$(1,1)(Ci,i) = (i_1,1)(\overline{i},i)$, where $\overline{i}$ is
the induced cofibration in the relative cone $\Sigma^i$;
$\delta_n$ is defined by $\delta_n([(F,G)])=[G]$.
\end{thm}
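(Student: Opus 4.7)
\noindent\emph{Proof proposal.} The plan is, in order, to verify that $f_\ast$, $j_n$ and $\delta_n$ are well-defined group homomorphisms; that each consecutive composite is trivial; and that at each of the three positions of the sequence the reverse kernel-in-image inclusion holds.

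Well-definedness and the homomorphism property of $f_\ast$ and of $j_n=(1,1)^\ast\theta$ come for free from Propositions \ref{pifiz}, \ref{theta} and \ref{pifder}. For $\delta_n$ both properties follow from the two observations (a) and (b) recorded just before Proposition \ref{theta}: (a) says that the $Y$-component of a pair-homotopy class is itself a homotopy class, so $\delta_n$ is well defined on classes, while (b) says that pair composition is computed componentwise, so $\delta_n$ preserves the group operation.

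For the vanishing of composites I would produce explicit nullhomotopies in each case. The equality $\delta_n\circ j_n=0$ is immediate from the proof of Proposition \ref{theta}: every class in the image of $\theta$ has $Y$-component of the form $h\rho^{n-1}$, which is the identity of $\pi_n^i(Y,h)$. For $j_n\circ f_\ast=0$, given $[F]\in\pi_n^i(Y,h)$ the class $\theta(f_\ast[F])$ is represented by a pair whose $X$-component $fF$ factors through $Y$ via $F$; one writes down a pair-nullhomotopy using $F$ and the cone projection $\rho$, and $(1,1)^\ast$ then lands on the identity of $\pi_n^i((X,Y),h)$. For $f_\ast\circ\delta_n=0$, the pair-morphism compatibility combined with the extension property relative to $(Ci,i)_{n-1}$ produces a nullhomotopy of $fG$ rel $i_{n-1}$, so $f_\ast[G]$ is trivial.

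The bulk of the work lies in the three kernel-in-image inclusions, which I would handle by dual constructions. At $\pi_n^i(X,fh)$, if $[\Phi]\in\ker j_n$ one inverts $\theta$ and lifts the resulting pair-homotopy through the square defining $(1,1)^\ast$, using NEP, to obtain a pair-homotopy relating $(\Phi,h\rho^{\cdot})$ to the identity pair rel $(Ci,i)_{n-1}$; its $Y$-component realizes a class $[F']$ with $f_\ast[F']=[\Phi]$. At $\pi_n^i((X,Y),h)$, given $[(F,G)]$ with $[G]$ trivial, the NEP for $(Ci,i)_{n-1}$ and the pushout structure of $\Sigma^i$ lift the $Y$-homotopy $G\simeq h\rho^{\cdot}$ to a pair homotopy replacing $(F,G)$ by a pair of the form $(F',h\rho^{\cdot})$, exhibiting $[(F,G)]$ as $j_n[F']$. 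Finally, at $\pi_n^i(Y,h)$, given a nullhomotopy $H:fF\simeq fh\rho^{n-1}$ rel $i_{n-1}$, the pair $(H,F)$ assembles into a class in $\pi_{n+1}^i((X,Y),h)$ mapping to $[F]$ under $\delta_{n+1}$. I expect the main obstacle to be this last assembly: one must verify that $(H,F)$ really defines a pair morphism extending the base $(fh\rho,h)$ along the iterated pair cofibration $(Ci,i)_n$ and that the assignment is compatible with the groupoid composition, so that the resulting class has the prescribed image under $\delta_{n+1}$. The bookkeeping in the higher pushouts defining the iterated $i_n$, together with the interaction with the extension $\mu$ used to define the group law, is the most delicate step; once it is settled, the remaining inclusions follow by analogous but easier manipulations.
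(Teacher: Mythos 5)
Your outline reproduces the paper's skeleton exactly --- well-definedness of $f_*$, $j_n$, $\delta_n$, vanishing of the three composites, and the three kernel-in-image inclusions --- but in a $C$-category the entire content of the theorem lies in actually exhibiting the extensions that your proposal only asserts can be ``written down'', and at two of the six steps the construction is substantially harder than you anticipate. Already for $j_nf_*=0$ the paper must split on the parity of $n$ and produce a homotopy $(HC^{\frac{n+3}{2}}\kappa,F)$ (resp.\ $(HC^{\frac{n+4}{2}}\kappa,F)$), where $H$ is an extension, relative to the cofibration $(Ci_{\frac{n+3}{2}})_{\frac{n+3}{2}}$ (resp.\ $(Ci_{\frac{n+2}{2}})_{\frac{n+4}{2}}$), of a morphism with a long, explicitly prescribed list of components of the form $fFC^{k}\rho$; nothing of this follows from ``a pair-nullhomotopy using $F$ and the cone projection $\rho$'', and identifying over which iterated relative cone one must extend is precisely the nontrivial point.

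More seriously, you misplace the main difficulty. You expect the delicate step to be assembling $(H,F)$ into a pair class for exactness at $\pi_n^i(Y,h)$; in the paper that is one of the short steps (a single extension $H$ of $\{ fh\rho^{n+2}C^{n+3}i,G,fh\rho^{n+1},\dots,fh\rho^{n+1}\}$ rel.\ $Ci_{n+2}$, after which $\delta([(H\kappa,F)])=[F]$). The genuinely hard inclusion is $\ker j_n\subseteq \mathrm{im}\, f_*$: the single NEP lift of the pair homotopy through the square defining $(1,1)^*$ that you propose does not by itself land at the correct relative level. The paper instead runs an iteration producing $n+1$ successive extensions $H_1,\dots,H_{n+1}$ relative to the descending family of cofibrations $(Ci_k)_{n+3-k}$, each $H_{k+1}$ taking the previous $H_k$ as one of its components and with parity-dependent bookkeeping, before $H_{n+1}C^{3}\kappa$ finally yields $fG\simeq F$ rel.\ $i_{n+2}$. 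Since your proposal neither anticipates that an iteration is needed nor supplies any of the explicit component lists, it is a correct plan with the paper's architecture, but the constructions --- which are the substance of the proof --- are missing.
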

\begin{proof}
Clearly $f_*$, $j_n$ and $\delta_n$ are homomorphisms of groups by
their definition.

\begin{enumerate}
\item[-] $\delta_nj_n([F]) = \delta_n([(F,h\rho^{n})]) = [h\rho^{n}]$.
\item[-] $f_*\delta_{n}([(F,G)]) = f_*([G]) = [fG] = [fh\rho^n]$ since
         $HC^{n+2}\kappa:fh\rho^{n}\simeq fG$ rel. $i_{n+1}$, where $H$ is an
         extension of the morphism $\{ fh\rho^{n+2}(C^2i)_{n+1},F\}$
         relative to the cofibration $(Ci)_{n+2}$.
\item[-] $j_nf_*([F]) = j_n([fF]) = [(fF,h\rho^n)] = [(fh\rho^{n+1},h\rho^n)]$
         since:

         If $n$ is odd then $(HC^{\frac{n+3}{2}}\kappa,F):
         (fh\rho^{n+1},h\rho^n)\simeq (fF,h\rho^n)$ rel. $(Ci,i)_{n+1}$, where
         $H$ is an extension of the morphism $\{ fh\rho^{n+3}C^{n+4}i,
         fF C^{\frac{n+1}{2}}\rho,
         \linebreak fF C^{\frac{n-1}{2}}\rho,fF C^{\frac{n-3}{2}}\rho,
         ...,fF\rho,fF C^{n+1}\rho,fF C^n \rho,...,
         fF C^{\frac{n+1}{2}}\rho\}$ relative to the cofibration
         $(Ci_{\frac{n+3}{2}})_{\frac{n+3}{2}}$.

         If $n$ is even then $(HC^{\frac{n+4}{2}}\kappa,F):
         (fF,h\rho^n)\simeq (fh\rho^{n+1},h\rho^n)$ rel. $(Ci,i)_{n+1}$, where
         $H$ is an extension of the morphism $\{ fh\rho^{n+3}C^{n+4}i,
         fF C^{\frac{n+2}{2}}\rho,
         \linebreak fF C^{\frac{n}{2}}\rho,fF C^{\frac{n-2}{2}}\rho,
         ...,fF C\rho,fF C^{n+1}\rho,fF C^n \rho,...,
         fF C^{\frac{n+2}{2}}\rho,fh\rho^{n+2}\}$
         \linebreak relative to the cofibration
         $(Ci_{\frac{n+2}{2}})_{\frac{n+4}{2}}$.
\item[-] If $\delta ([(F,G)])=[G]=[h\rho^n]$ then there is $H:h\rho^n\simeq
         G$ rel. $i_{n+1}$. $j_n([KC\kappa ])=[(KC\kappa,h\rho^n)]=[(F,G)]$ since
         $(K,H):(KC\kappa,h\rho^n)\simeq (F,G)$ rel. $(Ci,i)_{n+1}$, where
         $K$ is an extension of the morphism $\{ fh\rho^{n+2}C^{n+3}i,
         \linebreak fH,fh\rho^{n+1},...,fh\rho^{n+1},F\}$ relative to the
         cofibration $(Ci_{n+1})_1$.
\item[-] If $f_*([F])=[fF]=[fh\rho^n]$ then there is $G:fh\rho^n\simeq fF$
         rel. $i_{n+1}$. Let $H$ be an extension of the morphism $\{
         fh\rho^{n+2}C^{n+3}i,G,fh\rho^{n+1},...,fh\rho^{n+1}\}$ relative to
         the cofibration $Ci_{n+2}$, then $\delta ([(H\kappa,F)]) = [F]$.
\item[-] If $j_n([F])=[(F,h\rho^n)]=[(fh\rho^{n+1},h\rho^n)]$:

         {\it When $n$ is even}, taking $(H,G):(F,h\rho^n)\simeq
         (fh\rho^{n+1},h\rho^n)$ rel. $(Ci,i)_{n+1}$, $(H_1C^{n+3}\kappa )i_{n+3}
         = \{ fh\rho^{n+2}C^{n+2}i_1,fG,fh\rho^{n+1},...,fh\rho^{n+1},F\}$,
         where $H_1$ is an extension of the morphism $\{ fh\rho^{n+3}C^{n+4}i,
         F\rho,H,fh\rho^{n+2},...,
         \linebreak fh\rho^{n+2},FC^n\rho,FC^{n+1}\rho\}$ relative to
         the cofibration $(Ci)_{n+3}$.

         $(H_2C^{n+2}\kappa )i_{n+3} = \{ fh\rho^{n+2}C^{n+1}i_2,fG,fh\rho^{n+1},...,
         fh\rho^{n+1},F,fh\rho^{n+1}\}$, whe\-re $H_2$ is an extension of the
         morphism $\{ fh\rho^{n+3}C^{n+3}i_1,F\rho,H_1,
         \linebreak fh\rho^{n+2},..., fh\rho^{n+2}, FC^n\rho,FC^{n-1}\rho\}$
         relative to the cofibration $(Ci_1)_{n+2}$.

         $(H_3C^{n+1}\kappa )i_{n+3} = \{ fh\rho^{n+2}C^{n}i_3,fG,fh\rho^{n+1},...,
         fh\rho^{n+1},F\}$, where $H_3$ is an extension of the
         morphism $\{ fh\rho^{n+3}C^{n+2}i_2,F\rho,H_2,fh\rho^{n+2},...,
         \linebreak fh\rho^{n+2},
         FC^{n-2}\rho,FC^{n-1}\rho\}$ relative to the cofibration $(Ci_2)_{n+1}$.

         $(H_4C^{n}\kappa )i_{n+3} = \{ fh\rho^{n+2}C^{n-1}i_4,fG,fh\rho^{n+1},...,
         fh\rho^{n+1},F,fh\rho^{n+1}\}$, whe\-re $H_4$ is an extension of the
         morphism $\{ fh\rho^{n+3}C^{n+1}i_3,F\rho,H_3,fh\rho^{n+2},
         \linebreak ...,fh\rho^{n+2},FC^{n-2}\rho,FC^{n-3}\rho\}$ relative to
         the cofibration $(Ci_3)_{n}$.

         This process can be iterated to obtain a homotopy $H_{n+1}C^3\kappa:
         fG\simeq F$ rel. $i_{n+2}$.

         {\it When $n$ is odd}, the same process for a homotopy $(H,G):
         (fh\rho^{n+1},h\rho^n)
         \linebreak \simeq (F,h\rho^n)$ rel. $(Ci,i)_{n+1}$ gives
         $H_{n+1}C^3\kappa: fG\simeq F$ rel. $i_{n+2}$.
\end{enumerate}
\end{proof}

In pointed categories with a natural cone \cite{D-} classical exact
homotopy sequences associated to a pair are obtained as generalized
exact homotopy sequences taking the morphism $h=0$.

\section{Spherical Homotopy Groups}
Every category with a natural cylinder and product is a category
with a natural cone and the same cofibrations \cite{D-}. Therefore
the category of topological spaces with the topological cone has a
structure of natural cone. This topological cone is obtained as
the pushout of the inclusion in the base of the cylinder with the
projection on a point. In this way the cone of the empty
topological space is a point. So the category of pointed
topological spaces is a category under the cone of an object, that
obtains homotopy groups using spheres.

The development described above can be generalized for any category
with a natural cone:

The cone $C\nabla$ of any object $\nabla$ behave like a point. Fixed an
object $\nabla$, ${\bf C}^*$ is the full subcategory of ${\bf C}^{C\nabla}$
whose objects, $(X,x)$, are cofibrations $x:C\nabla\cof X$.

The cone functor $C_*:{\bf C}^*\to {\bf C}^*$ is defined by
$C_*(X,x) = (C_*X,\overline{Cx})$, where $C_*X=P\{ \rho,Cx\}$ and
given $f:(X,x)\to (Y,y)$, $C_*f=1\cup Cf:P\{ \rho,Cx\}\to P\{
\rho,Cy\}$.

Every pointed object $(X,x)$ has associated the trivial pushout
diagram of the object $P\{ 1,x\}$. However other pushout diagrams
can be also associated to the object $(X,x)$: Any cofibration
pushout diagram with induced cofibration $x$ will be considered a
pushout diagram associated to $(X,x)$.

\begin{thm}\label{copunt}
$C^n_*(X,x)=(P\{ \rho^n,C^nx\} ,\stackrel{(n}{\overline{Cx}})$,
where $\stackrel{(0}{\overline{Cx}}=x$ and
$\stackrel{(n}{\overline{Cx}}=\overline{C\stackrel{(n-1}{\overline{Cx}}}$
is obtained by iterated use of the functor $C_*$.
\end{thm}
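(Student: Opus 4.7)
The proof will proceed by induction on $n$, leveraging the inductive definitions of both $C_*^n$ and $\rho^n$. The base case $n=1$ is immediate: by definition of $C_*$, one has $C_*(X,x) = (P\{\rho,Cx\}, \overline{Cx})$, which matches $(P\{\rho^1, C^1 x\}, \stackrel{(1}{\overline{Cx}})$ directly, and the $n=0$ case is a matter of convention (the pushout degenerates to $X$ and $\stackrel{(0}{\overline{Cx}}=x$).

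For the inductive step, assume the formula at level $n-1$. Applying the functor $C_*$ to the induction hypothesis and unfolding the definition of $C_*$ on objects gives
$$C_*^n(X,x) \;=\; C_*\bigl(P\{\rho^{n-1}, C^{n-1}x\}, \stackrel{(n-1}{\overline{Cx}}\bigr) \;=\; \bigl(P\{\rho, C\stackrel{(n-1}{\overline{Cx}}\},\; \overline{C\stackrel{(n-1}{\overline{Cx}}}\bigr).$$
The second component is $\stackrel{(n}{\overline{Cx}}$ by the very recursive definition of that symbol, so the task reduces to identifying the pushout object $P\{\rho, C\stackrel{(n-1}{\overline{Cx}}\}$ with $P\{\rho^n, C^n x\}$.

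The heart of the argument is a pasting of pushouts. By Pushout Axiom C2, the cone functor carries the cofibrated pushout defining $P\{\rho^{n-1}, C^{n-1}x\}$ to a new pushout whose legs are $C\rho^{n-1}\colon C^{n+1}\nabla\to CC\nabla$ and $C^nx\colon C^{n+1}\nabla\to C^{n-1}X$. Stacking this on top of the pushout defining $P\{\rho, C\stackrel{(n-1}{\overline{Cx}}\}$ (whose legs are $\rho\colon CC\nabla\to C\nabla$ and $C\stackrel{(n-1}{\overline{Cx}}$), the standard pasting lemma for pushouts yields a composite pushout whose legs are $\rho\circ C\rho^{n-1}$ and $C^n x$. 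An iterated application of Axiom C1 (the relation $\rho\rho_C=\rho C\rho$) identifies $\rho\circ C\rho^{n-1}$ with $\rho^n$, so the composite pushout is precisely $P\{\rho^n, C^n x\}$, closing the induction.

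The main obstacle is the bookkeeping for the iterated natural transformation $\rho^n$: one must fix a composition convention for $\rho^n\colon C^{n+1}\nabla\to C\nabla$ and then verify, by a small subsidiary induction using C1, that the outer $\rho$ pre-composed with $C\rho^{n-1}$ agrees with the overall $\rho^n$. Apart from this, the argument is routine pushout pasting together with the preservation of cofibrated pushouts by $C$ guaranteed by C2.
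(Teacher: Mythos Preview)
Your argument is correct and is essentially the same as the paper's: both establish the big square with legs $\rho^n$ and $C^nx$ as a pushout by pasting the $C$-image of the previous stage against the defining pushout of $C_*$. The paper carries this out by direct verification of the universal property (building the mediating morphism $\{F,F\rho,\dots,F\rho^{n-1},G\}$ step by step), whereas you invoke the pasting lemma and induction; the content is the same. One small typo: in your description of the $C$-image of the inductive pushout, the codomain of $C^nx$ should be $C^nX$, not $C^{n-1}X$. Also note that with the paper's convention $\rho^n=\rho(C\rho)\cdots(C^{n-1}\rho)$ the identity $\rho\circ C\rho^{n-1}=\rho^n$ holds on the nose, so the appeal to C1 is not strictly needed (though it does no harm).
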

\begin{proof}
It is enough to observe that the following square diagram is a
pushout:
$$\xymatrix{
  {C^{n+1}\nabla} \ar[rr]^{\rho^n} \ar@{ >->}[d]_{C^nx} & & {C\nabla} \ar@{ >->}[d]^{\stackrel{(n}{\overline{Cx}}}  \\
  {C^nX} \ar[rr]_{\overline{\rho}C\overline{\rho}...C^{n-1}\overline{\rho}} & & {C^n_*X}}$$

\begin{enumerate}
\item[-] {\it Commutativity:}
      \begin{eqnarray*}
      \overline{\rho}C\overline{\rho}...C^{n-1}\overline{\rho}C^nx
        & = & \overline{\rho}C\overline{\rho}...C^{n-1}
              \stackrel{(1}{\overline{Cx}}C^{n-1}p =       \\
        & = & \overline{\rho}C\overline{\rho}...C^{n-2}
              \stackrel{(2}{\overline{Cx}}(C^{n-2}\rho )(C^{n-1}\rho ) =...  \\
        & = & \stackrel{(n}{\overline{Cx}}\rho (C\rho )....(C^{n-2}\rho )
              (C^{n-1}\rho ) = \stackrel{(n}{\overline{Cx}}\rho^n
      \end{eqnarray*}
% $\overline{p}C\overline{p}...C^{n-1}
%         \overline{p}C^nx = \overline{p}C\overline{p}...C^{n-1}
%         \stackrel{(1}{\overline{Cx}}C^{n-1}p = \overline{p}C\overline{p}
%         ...C^{n-2}\stackrel{(2}{\overline{Cx}}(C^{n-2}p)(C^{n-1}p) =....=
%         \stackrel{(n}{\overline{Cx}}p(Cp)....(C^{n-2}p)(C^{n-1}p) =
%         \stackrel{(n}{\overline{Cx}}p^n$.
\item[-] {\it Pushout property:}

         Given $F:C\nabla\to Y$ and $G:C^n X\to Y$ with $F\rho^n=
         GC^nx$, then
         $F\rho^{n-1}C^{n-1}p = GC^{n-1}(Cx)$, and there is $\{
         F\rho^{n-1},G\}:C^{n-1}(C_* X)\to Y$. $\{ F\rho^{n-1},G\}C^{n-2}
         C\overline{Cx} = \{ F\rho^{n-1},G\}C^{n-1}\overline{Cx}=
         F\rho^{n-2}C^{n-2}\rho$, and there is $\{ F\rho^{n-2},F\rho^{n-1},G\}:
         C^{n-2}C_*^2 X\to Y$. This process can be iterated to obtain
         the unique $\{ F,F\rho,...,F\rho^{n-1},G\}=\{ F,G\}:C_*^n X=P\{
         \rho^n,C^n x\}\to Y$.
\end{enumerate}
\end{proof}

Natural transformations $\kappa_*:1\to C_*$ and $\rho_*:C^2_*\to
C_*$ are defined by ${\kappa_{*}}_{(X,x)} = 1\cup
\kappa_X:(X,x)=P\{ 1,x\}\to C_*(X,x)=P\{ \rho,Cx\}$ and
${\rho_{*}}_{(X,x)} = 1\cup \rho_X:C_*^2(X,x) = P\{ \rho^2,C^2
x\}\to C_*(X,x)=P\{ \rho,Cx\}$

\begin{prop}
Given $f:(X,x)\to (Y,y)$, $C_*^n f = 1\cup C^nf:C_*^n(X,x) = P\{
\rho^n,C^nx\}\to C_*^n(Y,y) = P\{ \rho^n,C^ny\}$;
${\kappa_*}_{C_*^n(X,x)} = 1\cup \kappa_{C^n X} : C_*^n(X,x)=P\{
\rho^n,C^nx\} \to C_*^{n+1}(X,x) =  P\{ \rho^{n+1},C^{n+1}x\}$ and
${\rho_*}_{C_*^n(X,x)} = 1\cup \rho_{C^n X} : C_*^{n+2}(X,x)=P\{
\rho^{n+2},C^{n+2}x\} \to C_*^{n+1}(X,x) =  P\{
\rho^{n+1},C^{n+1}x\}$.
\end{prop}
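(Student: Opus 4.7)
The plan is to prove the three formulas simultaneously by induction on $n$. The base case $n=0$ is immediate: by Theorem \ref{copunt}, $C_*^0(X,x)$ is the trivial pushout $P\{1_{C\nabla},x\} = X$, and the three asserted equalities reduce to the very definitions of $C_*f$, $\kappa_*$ and $\rho_*$ recalled just above the proposition.

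For the inductive step, the key ingredients are: (a) axiom C2, which ensures $C$ carries cofibrated pushouts to pushouts and in particular $C(u\cup v) = Cu\cup Cv$ under the identification $CP\{\alpha,\beta\}=P\{C\alpha,C\beta\}$; (b) naturality of $\kappa$ and $\rho$, which yields $\kappa_{P\{\alpha,\beta\}} = \kappa \cup \kappa$ and $\rho_{P\{\alpha,\beta\}} = \rho \cup \rho$ under the same identifications; and (c) the pushout square exhibited in the proof of Theorem \ref{copunt}, which identifies $C_*^{n+1}(X,x) = C_*(C_*^n(X,x))$ with $P\{\rho^{n+1},C^{n+1}x\}$.

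Assuming the three formulas at stage $n$, I would compute $C_*^{n+1}f = C_*(C_*^n f) = 1\cup C(C_*^n f)$ straight from the definition of $C_*$ on morphisms applied to the object $C_*^n(X,x)$. The inductive hypothesis gives $C(C_*^n f) = C(1\cup C^n f)$, and (a) rewrites this as $1\cup C^{n+1}f$. The outer $1\cup$ refers to the pushout $P\{\rho,C\stackrel{(n}{\overline{Cx}}\}$ defining $C_*(C_*^n(X,x))$; by (c) the nested expression $1\cup(1\cup C^{n+1}f)$ agrees, via the universal property, with the single morphism $1\cup C^{n+1}f$ on $P\{\rho^{n+1},C^{n+1}x\}$. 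The formulas for ${\kappa_*}_{C_*^n(X,x)}$ and ${\rho_*}_{C_*^n(X,x)}$ follow by the same scheme, with (b) replacing the step that uses the inductive hypothesis on $C_*^n f$; for instance, ${\kappa_*}_{C_*^n(X,x)} = 1 \cup \kappa_{P\{\rho^n,C^n x\}}$ by definition, which via (b) becomes $1 \cup (\kappa \cup \kappa_{C^n X})$ and then $1\cup \kappa_{C^n X}$ after collapsing the nested pushout.

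The main obstacle is the bookkeeping: each $\cup$ refers to a particular pair of pushouts, and one has to verify that the nested expressions collapse to a single $\cup$ on the larger pushout $P\{\rho^{n+1},C^{n+1}x\}$. This amounts to pushout pasting, namely that $P\{\rho^{n+1},C^{n+1}x\}$ is built in two stages (first along $\rho^n$ and $C^n x$, then along $\rho$ and the induced cofibration $C\stackrel{(n}{\overline{Cx}}$) and that the universal property reconciles the two descriptions of the induced morphism. The content is entirely formal, once the identifications of Theorem \ref{copunt} are in hand.
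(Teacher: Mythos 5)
Your proposal is correct and is essentially the paper's argument in a slightly different packaging: the paper also reduces everything to the explicit pushout presentation $C_*^n(X,x)=P\{\rho^n,C^nx\}$ from Theorem \ref{copunt} and then checks, on the two legs of that pushout, that the two candidate descriptions of the induced morphism coincide (writing out only the $\kappa_*$ case, via naturality of $\kappa$ together with the cone axiom $\rho\kappa=1$, and declaring the other two equalities trivial by definition). Your induction-plus-pushout-pasting formulation carries the same content; just note that the ``collapsing'' step for $\kappa_*$ uses axiom C1 in addition to naturality and pasting.
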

\begin{proof}
$1\cup \kappa_{C^n X} =
\{\stackrel{(n+1}{\overline{Cx}},\overline{\rho}
C\overline{\rho}...C^n\overline{\rho}kC^n_X\}$. So that $(1\cup
\kappa_{C^n X}) \stackrel{(n}{\overline{Cx}} =
\stackrel{(n+1}{\overline{Cx}}$ and \linebreak = $(1\cup
\kappa_{C^n X})1_{C^n_*X}= \overline{\rho}\kappa_{C^n_* X}$ since
$(\overline{\rho}\kappa_{C^n_*X}) \stackrel{(n}{\overline{Cx}} =
\overline{\rho}C\stackrel{(n}{\overline{Cx}} \kappa_{C\nabla} =
\stackrel{(n+1}{\overline{Cx}}\rho_{C\nabla}\kappa_{C\nabla} =
\linebreak \stackrel{(n+1}{\overline{Cx}}$ and
$(\overline{\rho}\kappa_{C^n_*X})
\overline{\rho}C\overline{\rho}...C^{n-1}\overline{\rho} =
\overline{\rho}C\overline{\rho}...C^{n}\overline{\rho}\kappa_{C^nX}$.

Hence the morphisms $1\cup \kappa_{C^n X} : C_*^n(X,x)=P\{
\rho^n,C^nx\} \to C_*^{n+1}(X,x) =  P\{ \rho^{n+1},C^{n+1}x\}$ and
$1\cup \kappa_{C_*^n X} : C_*^n(X,x)=P\{
1,\stackrel{(n}{\overline{Cx}}\} \longrightarrow C_*^{n+1}(X,x) =
\linebreak P\{ \rho,C\stackrel{(n}{\overline{Cx}}\}$ agree.

The other equalities are trivial by definition.

\end{proof}

A morphism $i:(B,b)\to (A,a)$ is called a {\it pointed cofibration} when
$i:B\cof A$ is a cofibration in {\bf C}.

\begin{thm}\label{punto}
${\bf C}^*$, with the cone $C_*$, natural transformations
$\kappa_*$, $\rho_*$ and the pointed cofibrations, is a pointed
category with a natural cone.
\end{thm}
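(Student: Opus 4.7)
The plan is to verify the four axioms C1--C4 of Definition \ref{ccat} for $(\mathbf{C}^*, C_*, \kappa_*, \rho_*)$ with the class of pointed cofibrations, together with the two pointedness conditions. The guiding principle is that each assertion in $\mathbf{C}^*$ reduces, via Theorem \ref{copunt} and the preceding proposition, to an analogous fact in $\mathbf{C}$, often combined with cube arguments from Theorem \ref{cubo}.

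First I would identify the initial object of $\mathbf{C}^*$ as $(C\nabla, 1_{C\nabla})$: for every $(X,x)$ the unique pointed morphism is $x$ itself. A direct computation yields $C_*(C\nabla, 1_{C\nabla}) = P\{\rho, 1_{C^2\nabla}\} = (C\nabla, 1_{C\nabla})$ since pushout along an identity returns the other side, settling both pointedness conditions (every structure map is a cofibration because the initial one is an identity). The cone axiom C1 amounts to equalities of morphisms between pushout objects, checked by postcomposing with the pushout inclusions $\overline{\rho}$ (iterated as in Theorem \ref{copunt}) and the structure maps $\stackrel{(n}{\overline{Cx}}$, reducing term-by-term to the cone axiom for $(\mathbf{C}, C, \kappa, \rho)$ together with naturality.

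For the pushout axiom C2, given a pointed cofibration $i:(B,b)\cof (A,a)$ and a pointed morphism $f:(B,b)\to (X,x)$, I would form $P\{i,f\}$ in $\mathbf{C}$ with pointed structure $\overline{i}x: C\nabla \cof P\{i,f\}$ (a composition of cofibrations). The universal property in $\mathbf{C}^*$ is inherited from $\mathbf{C}$, and preservation by $C_*$ follows from an identification of iterated pushouts via C2 in $\mathbf{C}$. For C3, identities and compositions of pointed cofibrations are immediate. To see $\kappa_{*(X,x)} = \overline{\rho}\kappa_X$ is a cofibration, I would apply Theorem \ref{cubo} to the cube with top face the trivial pushout $P\{1,x\} = X$ and bottom face $P\{\rho, Cx\} = C_*X$, with vertical cofibrations $\kappa_{C\nabla}, 1_{C\nabla}, \kappa_X$; the cube hypothesis $\{Cx, \kappa_X\} = x_1: \Sigma^x \to CX$ being a cofibration holds by C4 in $\mathbf{C}$ applied to $x$. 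For NEP, an analogous cube argument first shows $C_*i$ is a cofibration; then a retraction $s: CA \to CB$ of $Ci$ from NEP in $\mathbf{C}$ satisfies $sCa = s(Ci)(Cb) = Cb$ automatically, so $r := \{\overline{Cb}, \overline{\rho}s\}: C_*(A,a) \to C_*(B,b)$ is a well-defined retraction of $C_* i$, verified on each pushout leg.

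The main obstacle is axiom C4: proving $i_{*1} = \{C_*i, \kappa_{*(A,a)}\}: \Sigma^i_* \to C_*A$ is a cofibration. The strategy is to supply two iterated-pushout descriptions: first, using the factorization $\kappa_{*(B,b)} = \overline{\rho}\kappa_B$ and the pushout composition rule, $\Sigma^i_* = P\{\overline{\rho}^B, \overline{i}\}$, where $\overline{i}: CB \cof \Sigma^i$ comes from the first-stage pushout $\Sigma^i = P\{\kappa_B, i\}$ and is a cofibration by C2; second, using the pasting lemma with the factorization $Ca = (Ci)(Cb)$ and the pushout $C_*B = P\{\rho, Cb\}$, one obtains $C_*A = P\{\overline{\rho}^B, Ci\}$ as a pushout sharing the leg $\overline{\rho}^B$. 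Theorem \ref{cubo} then applies to the cube with top face $\Sigma^i_*$, bottom face $C_*A$, and vertical cofibrations $1_{CB}, 1_{C_*B}, i_1: \Sigma^i \to CA$ (the last from C4 in $\mathbf{C}$). The cube hypothesis reduces to showing $\{Ci, i_1\}: P\{1, \overline{i}\} \cong \Sigma^i \to CA$ is a cofibration; this map simplifies to $i_1$, which is a cofibration. Theorem \ref{cubo} thus gives $1 \cup i_1: \Sigma^i_* \to C_*A$ as a cofibration, and direct verification on pushout legs identifies this morphism with $i_{*1}$, completing the proof.
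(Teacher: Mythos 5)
Your proposal is correct and follows essentially the same route as the paper: identify $(C\nabla,1)$ as the initial object with $C_*(C\nabla,1)=(C\nabla,1)$, then verify C1--C4 axiom by axiom, obtaining $\kappa_*$ and $i_{1*}=1\cup i_1$ as cofibrations from Theorem \ref{cubo} with the cube hypothesis reducing to $x_1$ (resp.\ $i_1$) being a cofibration in ${\bf C}$, and building the retraction for $C_*i$ as $1\cup r$ from NEP in ${\bf C}$. The only minor divergences are cosmetic: you justify the base point of a pushout as the composite of cofibrations $\overline{i}x$ where the paper invokes Theorem \ref{cubo} for $x\cup a$, and your cube for the relative cone axiom is pasted over $CB$ and $C_*B$ rather than over the $C\nabla$-indexed decomposition, but both reduce to the same hypothesis and yield the same conclusion.
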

\begin{proof}
Observe that $(C\nabla ,1)$ is an initial object with initial
morphisms, $x:(C\nabla ,1)\cof (X,x)$, pointed cofibrations; so
every object is cofibrant. Clearly $C_*(C\nabla ,1) = (P\{
\rho,1\},\overline{1}) = (C\nabla ,1)$.

\begin{enumerate}
\item[] {\it Cone axiom:} It is a simple verification.
\item[] {\it Pushout axiom:} Given a pointed cofibration
        $i:(B,b)\cof (A,a)$ and a morphism $f:(B,b)\to (X,x)$, then
        $P\{ f,i\} = (P\{ f,i\} ,x\cup a)$ with induced morphisms
        $\overline{f}:(A,a)\to (P\{ f,i\} ,x\cup a)$ and
        $\overline{i}:(X,x)\to (P\{ f,i\} ,x\cup a)$, where $x\cup
        a:C\nabla = P\{ 1,1\} \cof P\{ f,i\}$ is a cofibration by
        Theorem \ref{cubo} since $\{ a,i\}=i:P\{ 1,b\}=B\cof A$. Note
        that $\overline{i}=1\cup \overline{i}:P\{ 1,x\}\cof P\{
        1,x\cup a\}$ is a cofibration.

        $P\{ C_*f,C_*i\} = P\{ 1\cup Cf,1\cup Ci\} = P\{ \rho\cup \rho,
        Cx\cup Ca\} = C_*P\{ f,i\}$, with induced morphisms
        $\overline{C_*f} = 1\cup C\overline{f}$ and $\overline{C_*i}
        = 1\cup C\overline{i}$.
\item[] {\it Cofibration axiom:} Clearly $1_{(X,x)}$ and the
        composition of pointed cofibrations are pointed cofibrations.
        $\kappa_{(X,x)}=1\cup \kappa_X$ is a cofibration by Theorem \ref{cubo}
        since $x_1$ so is. Given a pointed cofibration $i:(B,b)\cof
        (A,a)$, by NEP in {\bf C} there is $r:CA\to CB$ such that
        $r(Ci)=1$. Then $1\cup r:C_*(A,a)=P\{ \rho,Ca\}\to C_*(B,b)=P\{
        \rho,Cb\}$ is a retraction for $C_*i$.
\item[] {\it Relative cone axiom.} Given a pointed cofibration $i:(B,b)\cof
        (A,a)$, $\{ C_*i,\kappa_*\}:P\{\kappa_*,i\}\to C_*(A,a)$ will be
        denoted by $i_{1*}:(\Sigma_*^i,\overline{Cb}\cup a)\to C_*(A,a)$.

        $\Sigma_*^i=P\{ 1\cup \kappa,1\cup i\}=P\{ \rho\cup 1,Cb\cup a\}$
        with induced morphisms $\overline{\kappa_*}=1\cup \overline{\kappa}$ and
        $\overline{i}=1\cup \widetilde{i}$, where $\widetilde{i}$ is
        the induced cofibration by $i$ in the pushout of $P\{ \kappa,i\}$.

        $(1\cup i_1)(1\cup \overline{\kappa}) = 1\cup \kappa = \kappa_*$ and
        $(1\cup i_1)(1\cup \widetilde{i}) = 1\cup Ci = C_*i$. So
        $i_{1*} = 1\cup i_1$, that is a cofibration by Theorem
        \ref{cubo} since $\{ i_1,Ca\} =i_1:P\{ Cb\cup a,1\} =\Sigma^i
        \cof CA$.
\end{enumerate}
\end{proof}

The concepts developed in pointed categories with a natural cone
\cite{D-} are also available in ${\bf C}^*$. They can be related with
the respective ones of the original category {\bf C}.

\begin{prop}
Given $f:(X,x)\to (Y,y)$, $f\simeq 0$ in ${\bf C}^*$ if and only if
$f\simeq 0$ in {\bf C}.
\end{prop}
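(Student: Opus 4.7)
My plan is to prove the ``only if'' direction directly and then reduce the ``if'' direction to two applications of NEP.

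For the ``only if'' direction, a morphism $\phi:C_*(X,x)\to(Y,y)$ in ${\bf C}^*$ is a morphism $\phi:P\{\rho,Cx\}\to Y$ in ${\bf C}$ satisfying $\phi\overline{Cx}=y$. Unwinding the definition of ${\kappa_*}_{(X,x)}=1\cup\kappa_X=\{\overline{Cx},\overline{\rho}\kappa_X\}$, one sees that $\phi$ must have the form $\{y,v\}$ with $v=\phi\overline{\rho}:CX\to Y$ satisfying $vCx=y\rho$, and that $\phi\cdot{\kappa_*}_{(X,x)}=v\kappa_X$. Thus a nullhomotopy $\phi$ of $f$ in ${\bf C}^*$ gives immediately a nullhomotopy $v$ of $f$ in ${\bf C}$.

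For the ``if'' direction, given $F:CX\to Y$ with $F\kappa_X=f$, I must produce $v:CX\to Y$ with $v\kappa_X=f$ and the basepoint condition $vCx=y\rho$; then $\phi=\{y,v\}:C_*(X,x)\to(Y,y)$ is a nullhomotopy in ${\bf C}^*$ by the discussion above. Since $y\rho\cdot\kappa_{C\nabla}=y=fx$, the pair $(y\rho,f)$ assembles into a morphism $\{y\rho,f\}:\Sigma^x=P\{\kappa_{C\nabla},x\}\to Y$, and producing $v$ is equivalent to extending $\{y\rho,f\}$ along the cofibration $x_1=\{Cx,\kappa_X\}:\Sigma^x\cof CX$ of axiom C4. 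By NEP, such an extension exists provided $\{y\rho,f\}$ is nullhomotopic.

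To show that $\{y\rho,f\}$ is nullhomotopic, I would exhibit $H=\{h_2,F\}:C\Sigma^x=P\{C\kappa_{C\nabla},Cx\}\to Y$ as its nullhomotopy: $H$ is well defined whenever $h_2\cdot C\kappa_{C\nabla}=FCx$, and a direct calculation using the naturality of $\kappa$ shows $H\kappa_{\Sigma^x}=\{y\rho,f\}$ whenever moreover $h_2\kappa_{C^2\nabla}=y\rho$. These two conditions on $h_2:C^3\nabla\to Y$ say exactly that $h_2$ extends $\{FCx,y\rho\}:\Sigma^{\kappa_{C\nabla}}\to Y$ along the cofibration $(\kappa_{C\nabla})_1$. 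Because both source and target of $\kappa_{C\nabla}$ are cones, hence contractible, $\kappa_{C\nabla}$ is a contractible cofibration and $\Sigma^{\kappa_{C\nabla}}$ is contractible; therefore every morphism out of it is nullhomotopic, and a second application of NEP to $(\kappa_{C\nabla})_1$ yields the required $h_2$.

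The main obstacle is bookkeeping: many interlocking pushouts appear, and one must keep straight which induced morphism of each pushout corresponds to which component before checking that the two scalar conditions $h_2\cdot C\kappa_{C\nabla}=FCx$ and $h_2\kappa_{C^2\nabla}=y\rho$ do suffice for $H\kappa_{\Sigma^x}=\{y\rho,f\}$. Once this verification is carried out, the proof is essentially two applications of NEP together with the contractibility of $\Sigma^{\kappa_{C\nabla}}$.
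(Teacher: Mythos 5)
Your proposal is correct, and its skeleton coincides with the paper's: the ``only if'' direction is exactly the paper's composition with $\overline{\rho}$, and the ``if'' direction is the same reduction, via NEP for $x_1$, to showing $\{y\rho,f\}:\Sigma^x\to Y$ is nullhomotopic. The only divergence is how you produce that nullhomotopy $H=\{h_2,F\}$. The paper simply writes down $h_2=FCxC\rho$ explicitly: the cone axiom gives $C\rho\cdot C\kappa_{C\nabla}=1$ and naturality of $\kappa$ gives $FCxC\rho\,\kappa_{C^2\nabla}=F\kappa_X x\rho=y\rho$, so both of your conditions are met in one line and the whole ``if'' direction uses NEP exactly once. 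You instead obtain $h_2$ abstractly, as an extension of $\{FCx,y\rho\}$ along $(\kappa_{C\nabla})_1$, which exists by a second application of NEP because $\Sigma^{\kappa_{C\nabla}}$ is contractible (both ends of $\kappa_{C\nabla}$ being cones). Your route is sound --- the contractibility of relative cones of contractible cofibrations is recorded in the paper, and every morphism out of a contractible object is nullhomotopic --- and it has the merit of not requiring one to guess the formula; the cost is an extra layer of existence argument where a one-line explicit verification suffices. For completeness you should still record the compatibility $FCx\,\kappa_{C\nabla}=y=y\rho\kappa_{C\nabla}$ that makes $\{FCx,y\rho\}$ a well-defined morphism on $\Sigma^{\kappa_{C\nabla}}$, but that is part of the bookkeeping you already flag.
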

\begin{proof}
If $F:f\simeq 0$ in ${\bf C}^*$ then $F\overline{\rho}:f\simeq 0$
in {\bf C}, where $\overline{\rho}$ is the induced morphism in the
pushout of $C_*X$.

If $F:f\simeq 0$ in {\bf C} then $\{ FCxC\rho,F\}:\{
y\rho,f\}\simeq 0$, and by NEP there is an extension $H$ of the
morphism $\{ y\rho,f\}$ relative to the cofibration $x_1$. $\{
y,H\}:f\simeq 0$ in ${\bf C}^*$.

\end{proof}

\begin{cor}
$(X,x)\simeq 0$ if and only if $X\simeq 0$.
\end{cor}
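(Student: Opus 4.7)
The plan is to apply the preceding proposition to the identity morphism. Recall that an object $Z$ in any $C$-category is declared contractible when $1_Z\simeq 0$. So unwinding the definition, $(X,x)\simeq 0$ in ${\bf C}^*$ means exactly that $1_{(X,x)}\simeq 0$ in ${\bf C}^*$, while $X\simeq 0$ in ${\bf C}$ means $1_X\simeq 0$ in ${\bf C}$.

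The first step is to observe that the identity of $(X,x)$ viewed as an object of ${\bf C}^*$ is, as a morphism of the underlying category ${\bf C}$, literally the identity $1_X$: indeed, a morphism $(X,x)\to (X,x)$ in ${\bf C}^*$ is a morphism $f:X\to X$ in ${\bf C}$ with $fx=x$, and $1_X$ trivially satisfies this, and it is the identity for composition in ${\bf C}^*$ since composition in ${\bf C}^*$ is inherited from ${\bf C}$.

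Now I apply the preceding proposition to $f=1_{(X,x)}:(X,x)\to (X,x)$. That proposition asserts that a morphism $f:(X,x)\to (Y,y)$ is nullhomotopic in ${\bf C}^*$ if and only if the underlying morphism $f$ is nullhomotopic in ${\bf C}$. Hence $1_{(X,x)}\simeq 0$ in ${\bf C}^*$ iff $1_X\simeq 0$ in ${\bf C}$, which is the desired equivalence $(X,x)\simeq 0\iff X\simeq 0$.

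There is really no obstacle here; the whole content of the corollary is already packaged in the previous proposition, and the only thing to verify is the trivial identification of $1_{(X,x)}$ with $1_X$ under the forgetful passage ${\bf C}^*\to {\bf C}$. The proof is therefore a one-line specialization.
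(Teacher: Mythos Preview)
Your proof is correct and is exactly the intended argument: the paper states this corollary without proof, as an immediate specialization of the preceding proposition to $f=1_{(X,x)}$. There is nothing to add.
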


Next pointed homotopy groups in ${\bf C}^*$ will be stated as
generalized homotopy groups of {\bf C}. A study about pushout
diagrams associated to pointed objects is necessary for it.

\begin{prop}\label{copun}
If $P\{ s,j\}$ is a pushout diagram associated to $(X,x)$ then
$C_*^n(X,x) = P\{ \rho^nC^ns,C^nj\}$.
\end{prop}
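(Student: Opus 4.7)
The plan is to recognize this as a straightforward application of the pushout pasting lemma together with axiom \textbf{C2}. First, unravel the hypothesis: saying $P\{s,j\}$ is a pushout diagram associated to $(X,x)$ means that $j:B\cof A$ is a cofibration, $s:B\to C\nabla$ is a morphism, and the cofibrated pushout of $s$ and $j$ yields $X$ with induced cofibration $x:C\nabla\cof X$. In particular $x=\overline{j}$ in the pushout $P\{s,j\}$.

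Next, apply the cone functor $n$ times to this pushout. By axiom \textbf{C2}, $C$ carries cofibrated pushouts to pushouts and sends cofibrations to cofibrations, so iterating gives
$$C^n X = P\{C^n s,\, C^n j\},$$
with $C^n x$ as the induced cofibration along $C^n s$. Then Theorem \ref{copunt} identifies
$$C_*^n(X,x) = P\{\rho^n,\, C^n x\},$$
with induced cofibration $\stackrel{(n}{\overline{Cx}}:C\nabla \cof C_*^n X$.

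Now I would stack these two pushout squares horizontally:
$$\xymatrix{
 C^n B \ar[r]^{C^n s} \ar@{ >->}[d]_{C^n j} & C^{n+1}\nabla \ar[r]^{\rho^n} \ar@{ >->}[d]^{C^n x} & C\nabla \ar@{ >->}[d]^{\stackrel{(n}{\overline{Cx}}} \\
 C^n A \ar[r] & C^n X \ar[r] & C_*^n X
}$$
The left square is the iterated application of $C$ to the given pushout, and the right square is the pushout from Theorem \ref{copunt}. By the standard pasting lemma for pushouts, the outer rectangle is also a pushout; that is,
$$C_*^n(X,x) = P\{\rho^n C^n s,\, C^n j\},$$
and the morphism $\stackrel{(n}{\overline{Cx}}$ is precisely the induced cofibration along $\rho^n C^n s$, as required. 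The only real point of care is to verify this identification of induced morphisms and to check that the outer rectangle is cofibrated (which follows because $C^n j$ is a cofibration, using that $C$ sends cofibrations to cofibrations by \textbf{C3}); neither should cause substantive difficulty.
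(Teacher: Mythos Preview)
Your proof is correct and follows essentially the same approach as the paper: both arguments paste the pushout $C^nX = P\{C^ns, C^nj\}$ (obtained from axiom \textbf{C2}) with the pushout $C_*^n X = P\{\rho^n, C^nx\}$ from Theorem \ref{copunt} to obtain the outer rectangle as $P\{\rho^n C^n s, C^n j\}$. The only minor quibble is that the fact ``$C$ preserves cofibrations'' is noted immediately after Definition \ref{ccat} rather than being part of \textbf{C3} itself, and the cofibrated hypothesis is not needed for the pasting lemma per se---it is only needed to guarantee each inner square is a pushout.
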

\begin{proof}
It is enough to observe the following composition of pushouts:

\begin{picture}(300,75)(0,0)
\put(60,50){\makebox(0,0){$C^n S$}}
\put(170,50){\makebox(0,0){$C^{n+1}\nabla$}}
\put(280,50){\makebox(0,0){$C\nabla$}}
\put(60,10){\makebox(0,0){$C^n T$}}
\put(170,10){\makebox(0,0){$C^n$X}}
\put(280,10){\makebox(0,0){$\widehat{C}^n$X}}

\put(80,50){\vector(1,0){70}}
         \put(115,58){\makebox(0,0){$C^ns$}}
\put(190,50){\vector(1,0){70}}
         \put(225,58){\makebox(0,0){$\rho^n$}}
\put(80,10){\vector(1,0){70}}
         \put(115,18){\makebox(0,0){$C^n\overline{s}$}}
\put(190,10){\vector(1,0){70}}
         \put(225,18){\makebox(0,0){$\overline{\rho}C\overline{\rho}...
                                    C^{n-1}\overline{\rho}$}}
\put(60,40){\cab{13}{}}
         \put(35,25){$C^nj$}
\put(170,40){\cab{13}{}}
         \put(145,25){$C^nx$}
\put(280,40){\cab{13}{}}
         \put(285,23){$\stackrel{(n}{\overline{Cx}}$}
\end{picture}
\end{proof}

\begin{cor}\label{morpun}
Given $g=1\cup f:(X,x)=P\{ s,j\}\to (X',x')=P\{ s',j'\}$:
\begin{enumerate}
\item[a)] $C_*^ng=1\cup C^nf:C_*^n(X,x)=P\{ \rho^nC^ns,C^nj\}\to
          C_*^n(X',x')= P\{ \rho^nC^ns',C^nj'\}$.
\item[b)] ${\kappa_*}_{C_*^n(X,x)}=1\cup \kappa_{C^nT}:C_*^n(X,x)=P\{ \rho^nC^ns,C^nj\}\to
          C_*^{n+1}(X,x)=P\{ \rho^{n+1}C^{n+1}s,C^{n+1}j\}$.
\item[c)] ${\rho_*}_{C_*^n(X,x)}=1\cup \rho_{C^nT}:C_*^{n+2}(X,x)=
          P\{ \rho^{n+2}C^{n+2}s,C^{n+2}j\}\to C_*^{n+1}(X,x)=
          P\{ \rho^{n+1}C^{n+1}s,C^{n+1}j\}$.
\end{enumerate}
\end{cor}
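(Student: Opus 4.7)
The plan is to deduce all three formulas by transporting the known expressions for $C_*^ng$, ${\kappa_*}_{C_*^n(X,x)}$ and ${\rho_*}_{C_*^n(X,x)}$ along the identification of $C_*^n(X,x)$ with the composite pushout $P\{\rho^nC^ns,C^nj\}$ supplied by Proposition \ref{copun}.

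First I would unpack how that identification arises: one stacks two cofibrated pushouts, namely the pushout $X=P\{s,j\}$ defining $(X,x)$, to which $C^n$ is applied (so by axiom \textbf{C2} one also has $C^nX=P\{C^ns,C^nj\}$ with induced cofibration $C^nx$), and the pushout presentation $C_*^n(X,x)=P\{\rho^n,C^nx\}$ from Theorem \ref{copunt}. The upshot is that every morphism out of $C_*^n(X,x)$ is determined by restriction to the two ``outer'' legs, $C\nabla$ and $C^nT$.

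For part (a), functoriality of $C^n$ applied to $g=1\cup f$ yields $C^ng=1\cup C^nf:P\{C^ns,C^nj\}\to P\{C^ns',C^nj'\}$. Substituting this into the formula $C_*^ng=1_{C\nabla}\cup C^ng$ from the preceding Proposition (valid in the $P\{\rho^n,C^nx\}$ presentation) and collapsing the two layers of identity on the $\nabla$-side through the composite pushout, I arrive at $C_*^ng=1_{C\nabla}\cup C^nf$ in the $P\{\rho^nC^ns,C^nj\}$ presentation, which is exactly the claim. For parts (b) and (c), I would use naturality of $\kappa$ and $\rho$ with respect to the intermediate pushout $C^nX=P\{C^ns,C^nj\}$ to write $\kappa_{C^nX}=\kappa_{C^nS}\cup\kappa_{C^nT}$ and $\rho_{C^nX}=\rho_{C^nS}\cup\rho_{C^nT}$. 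Plugging these into the previous Proposition's expressions ${\kappa_*}_{C_*^n(X,x)}=1\cup\kappa_{C^nX}$ and ${\rho_*}_{C_*^n(X,x)}=1\cup\rho_{C^nX}$, and using cone axiom \textbf{C1} (specifically $\rho\kappa_C=\rho C\kappa=1_C$) to absorb the $C\nabla$-component back to the identity, yields $1_{C\nabla}\cup\kappa_{C^nT}$ and $1_{C\nabla}\cup\rho_{C^nT}$ respectively.

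The main obstacle will be the bookkeeping: one must check via the universal property of the composite pushout that the $C\nabla$-component really does simplify to the identity rather than picking up some nontrivial piece coming from the $S$-side, and that the compatibility conditions $1_{C\nabla}\cdot\rho^nC^ns=\rho^{n+1}C^{n+1}s\cdot\kappa_{C^nS}$ and its analogue for $\rho$ are genuine consequences of \textbf{C1} combined with the naturality of $\kappa$ and $\rho$. This is purely a diagram chase, with no new homotopical input beyond what is already packaged into Proposition \ref{copun}.
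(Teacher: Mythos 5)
Your proposal is correct and takes essentially the same approach as the paper: both identify $C_*^n(X,x)$ with the composite pushout of Proposition \ref{copun} and verify the asserted formulas on its two legs, using the expressions from the preceding Proposition together with naturality of $\kappa$ and $\rho$ and axiom \textbf{C1} to reduce the $C\nabla$-component to the identity. The only quibble is notational: in the decomposition of $\kappa_{C^nX}$ for $C^nX=P\{C^ns,C^nj\}$ the codomain components are $\kappa_{C^{n+1}\nabla}$ and $\kappa_{C^nT}$ (with $\kappa_{C^nS}$ the map on the common domain), exactly as in the paper's computation.
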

\begin{proof}
\begin{enumerate}
\item[(a)] $1\cup C^nf=\{ \stackrel{(n}{\overline{Cx'}},\overline{\rho}
           C\overline{\rho}...C^{n-1}\overline{\rho}C^n\overline{s'}
           C^nf\}$. So that:

           $(1\cup C^nf) \stackrel{(n}{\overline{Cx}} \hspace{4pt} =
           \hspace{4pt} \stackrel{(n}{\overline{Cx'}}$, and
           \begin{eqnarray*}
           (1\cup C^nf)(\overline{\rho}C\overline{\rho}...C^{n-1}\overline{\rho})& = &
                     \{ \stackrel{(n}{\overline{Cx'}}\rho^n,
                     \overline{\rho}C\overline{\rho}...C^{n-1}\overline{\rho}C^n
                     \overline{s'}C^nf\} =                              \\
               & = & \{ \overline{\rho}C\overline{\rho}...C^{n-1}
           \overline{\rho}C^nx',\overline{\rho}C\overline{\rho}...C^{n-1}
           \overline{\rho}C^n \overline{s'}C^nf\} =                        \\
               & = & \overline{\rho}C\overline{\rho}...C^{n-1}
           \overline{\rho}\{ C^nx',C^n \overline{s'}C^nf\} =                \\
               & = & \overline{\rho}C\overline{\rho}...C^{n-1}
           \overline{\rho}(1\cup C^nf) =                                    \\
               & = & (\overline{\rho}C\overline{\rho}...C^{n-1}
                     \overline{\rho})C^ng
\end{eqnarray*}
\item[(b)] $1\cup \kappa_{C^nT}=\{ \stackrel{(n+1}{\overline{Cx}},\overline{\rho}
           C\overline{\rho}...C^{n}\overline{\rho}C^{n+1}\overline{s}
           \kappa_{C^nT}\}$. So that:

           $(1\cup \kappa_{C^nT})\stackrel{(n}{\overline{Cx}} \hspace{4pt}
           = \hspace{4pt} \stackrel{(n+1}{\overline{Cx}}$, and
           \begin{eqnarray*}
           (1\cup \kappa_{C^nT})(\overline{\rho}C\overline{\rho}...C^{n-1}\overline{\rho})
              & = &  \{ \stackrel{(n+1}{\overline{Cx}}\rho^n,
                     \overline{\rho}C\overline{\rho}...C^{n}\overline{\rho}C^{n+1}
                     \overline{s}\kappa_{C^nT}\} =                              \\
              & = &  \{ \stackrel{(n+1}{\overline{Cx}}\rho^{n+1}
                     \kappa_{C^n\nabla},\overline{\rho}C\overline{\rho}...C^{n}
                     \overline{\rho}C^{n+1}\overline{s}\kappa_{C^nT}\} =           \\
              & = & \overline{\rho}C\overline{\rho}...C^{n}
                    \overline{\rho}\{ C^{n+1}x\kappa_{C^{n+1}\nabla},C^{n+1}
                    \overline{s}\kappa_{C^nT}\} =                               \\
              & = & \overline{\rho}C\overline{\rho}...C^{n}\overline{\rho}
                    (\kappa_{C^{n+1}\nabla}\cup \kappa_{C^nT}) =                     \\
              & = & (\overline{\rho}C\overline{\rho}...C^{n}\overline{\rho})
                    \kappa_{C^n X}
\end{eqnarray*}
\item[(c)] $1\cup \rho_{C^nT}=\{ \stackrel{(n+1}{\overline{Cx}},\overline{\rho}
           C\overline{\rho}...C^{n}\overline{\rho}C^{n+1}\overline{s}
           \rho_{C^nT}\}$. So that:

           $(1\cup \rho_{C^nT})\stackrel{(n+2}{\overline{Cx}} \hspace{4pt} =
           \hspace{4pt}\stackrel{(n+1}{\overline{Cx}}$, and
           \begin{eqnarray*}
           (1\cup \rho_{C^nT})(\overline{\rho}C\overline{\rho}...C^{n+1}\overline{\rho})
              & = &  \{ \stackrel{(n+1}{\overline{Cx}}\rho^{n+2},
                     \overline{\rho}C\overline{\rho}...C^{n}\overline{\rho}C^{n+1}
                     \overline{s}\rho_{C^nT}\} =                             \\
%              & = & \{ \overline{p}C\overline{p}...C^{n}
%                    \overline{p} C^{n+1}xp_{C^n\nabla},\overline{p}C
%                    \overline{p}...C^{n} \overline{p}C^{n+1}\overline{s}
%                    p_{C^nT}\} =                                          \\
                    {}\hspace{15pt}{} & = & \overline{\rho}C\overline{\rho}...
                    C^{n}\overline{\rho}(\rho_{C^{n+1}\nabla}\cup \rho_{C^nT})=     \\
              & = & (\overline{\rho}C\overline{\rho}...C^{n}\overline{\rho})\rho_{C^n X}
\end{eqnarray*}
\end{enumerate}
\end{proof}

\begin{dfn}
A pointed cofibration $u:(B,b)\cof (A,a)$ is said to have
associated a pushout cofibration when $u=1\cup i:(B,b)=P\{
s,j\}\cof (A,a)=P\{ s,j'\}$, where $i:T\cof T'$ is a cofibration
verifying $j'=ij$.
\end{dfn}

Observe that every pointed cofibration $i:(B,b)\cof (A,a)$ has an
associated pushout cofibration $1\cup i:(B,b)=P\{ 1,b\}\cof
(A,a)=P\{ 1,a\}$.

On the other hand, every object $X$ can be considered as an
iterated pushout diagram: $X=P\{ 1_X,1_X\} = P\{ 1_X^1,1_X\} = P\{
1_X^2,1_X\} = ...  = P\{ 1_X^n,1_X\}$, where $1_X^0 = 1_X$ and
$1_X^r = 1_X:X\cof P\{ 1_X^{r-1},1_X\}$. In this way it is
possible to define curly braces or unions with domain the object
$X$.

\begin{thm}
Given a pushout cofibration $1\cup i:(B,b)=P\{ s,j\}\cof (A,a)=P\{
s,j'\}$ associated to a pointed cofibration $u$, the following
square diagram is a pushout:

\begin{picture}(300,110)(0,0)
\put(40,70){\makebox(0,0){$\Sigma^{(1_S)_n} = C^{n+1}S$}}
\put(340,70){\makebox(0,0){$C\nabla$}}
\put(40,20){\makebox(0,0){$\Sigma^{i_n}$}}
\put(340,20){\makebox(0,0){$\Sigma_*^{u_{n*}}$}}

\put(90,70){\vector(1,0){220}}
  \put(190,80){\makebox(0,0){$\rho^{n+1}C^{n+1}s$}}
\put(60,20){\vector(1,0){260}}
  \put(190,10){\makebox(0,0){\footnotesize $\overline{\rho}C\overline{\rho}...
              C^n\overline{\rho}C^{n+1}\overline{s}\cup
              \overline{\rho}C\overline{\rho}...C^{n-1}\overline{\rho}C^n\overline{s}
              \cup \stackrel{(n+1}{........}\cup \overline{\rho}C\overline{\rho}
              ...C^{n-1}\overline{\rho}C^n\overline{s}$}}
\put(40,60){\cab{23}{}}
  \put(105,45){\makebox(0,0){\footnotesize $C^{n+1}j\cup C^nj'\cup
             \stackrel{(n+1}{........}\cup C^nj'$}}
\put(340,60){\cab{23}{}}
  \put(285,50){\makebox(0,0){\footnotesize $\stackrel{(n+1}{\overline{Cb}}
               \cup \stackrel{(n}{\overline{Ca}}\cup
               \stackrel{(n+1}{........}
               \cup \stackrel{(n}{\overline{Ca}}$}}
\end{picture}

$(n\in {\Bbb N}^*$ and $C^{-1}\overline{\rho} C^0\overline{s} =
\overline{s}$)

such that $u_{n+1*} = 1\cup i_{n+1}:\Sigma_*^{u_{n*}} = P\{
\rho^{n+1}C^{n+1}s, C^{n+1}j\cup C^nj'\cup
\stackrel{(n+1}{........}\cup C^nj'\}\to C_*^{n+1}(A,a) = P\{
\rho^{n+1}C^{n+1}s,C^{n+1}j'\}$.
\end{thm}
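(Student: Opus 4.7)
The plan is to argue by induction on $n$, simultaneously proving that the displayed square is a pushout and that the resulting presentation forces $u_{n+1*} = 1 \cup i_{n+1}$. The base case $n = 1$ can be handled directly: unfolding $u = 1\cup i : P\{s,j\} \cof P\{s,j'\}$ and $\kappa_* = 1\cup \kappa$ (Corollary \ref{morpun}(b)), the pointed relative cone is $\Sigma_*^{u} = P\{\kappa_*,u\} = P\{1\cup\kappa,\, 1\cup i\}$. This is a pushout of two morphisms each of which is itself a $1\cup(\cdot)$ assembled from pushouts; applying the cube theorem \ref{cubo} to the resulting three-dimensional diagram identifies $\Sigma_*^{u}$ with the stated pushout $P\{\rho^{2}C^{2}s,\, C^{2}j\cup Cj'\cup Cj'\}$ and exhibits the square as a pushout face.

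For the inductive step, assume the pushout description for $n-1$. First, I use it to compute $u_{n+1*}$ algebraically. By definition $u_{n+1*} = (u_{n*})_1 = \{C_*u_{n*},\kappa_*\}$. The inductive hypothesis gives $u_{n*} = 1\cup i_n$ (as a morphism of pushouts over the common $(C\nabla,1)$-part), and Corollary \ref{morpun}(a)-(b) yields $C_*u_{n*} = 1\cup Ci_n$ and $\kappa_* = 1\cup \kappa$. Combining these inside the curly-brace formalism gives
\[
u_{n+1*} \;=\; \{1\cup Ci_n,\; 1\cup\kappa\} \;=\; 1\cup\{Ci_n,\kappa\} \;=\; 1\cup i_{n+1},
\]
which matches the stated formula and confirms that $i_{n+1}$ is indeed a cofibration because $1\cup i_{n+1}$ is (via Theorem \ref{cubo} and axiom \textbf{C4}).

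To obtain the pushout square at level $n$, I apply the pointed pushout axiom (established in Theorem \ref{punto}) to form $\Sigma_*^{u_{n*}} = P\{\kappa_*, u_{n*}\}$, using $\kappa_* = 1\cup\kappa$ and $u_{n*} = 1\cup i_n$ from the inductive hypothesis. This presents $\Sigma_*^{u_{n*}}$ as an iterated pushout that differs from $\Sigma^{i_n}$ only by gluing in one extra $C\nabla$-component through the map $\rho^{n+1}C^{n+1}s$. Formally, I stack the inductive pushout square for $n-1$ with the pushout that defines $\Sigma^{i_n} = P\{\kappa,i_n\}$ and the one that defines $\Sigma_*^{u_{n*}}$; the cube theorem then identifies the bottom face with the desired pushout square, with the required cofibration hypothesis supplied by $i_n$ (which is a cofibration by axiom \textbf{C4} and induction) together with Proposition \ref{copun}.

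The principal obstacle is purely bookkeeping: correctly tracking the $(n+1)$-fold iterated $\cup$-expressions $C^{n+1}j \cup C^n j' \cup \stackrel{(n+1}{\cdots}\cup C^n j'$ and $\overline{\rho}C\overline{\rho}\cdots C^n\overline{\rho}C^{n+1}\overline{s}\cup \cdots$, and verifying that the morphisms produced by each application of Theorem \ref{cubo} agree on overlaps so that the successive curly braces compose unambiguously. Once a consistent indexing convention is fixed (for instance, labelling the $n+1$ factors of $C_*^{n+1}(A,a) = P\{\rho^{n+1}C^{n+1}s, C^{n+1}j'\}$ and of $\Sigma_*^{u_{n*}}$ by the same set of indices), each inductive step reduces to a routine verification that a cube of pushouts has a pushout bottom face, for which Theorem \ref{cubo} is tailor-made.
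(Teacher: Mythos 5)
Your overall architecture (induction on $n$, with the computation $u_{n+1*}=\{C_*u_{n*},\kappa_*\}=1\cup\{Ci_n,\kappa\}=1\cup i_{n+1}$ extracted from the pushout presentation) is reasonable, and that last computation is essentially the same unwinding the paper performs when it writes $u_{n+1*}=\{C_*^{n+1}(1\cup i),C_*^n\kappa_*,\dots,\kappa_*\}=\dots=1\cup i_{n+1}$. But the central claim of the theorem --- that the displayed square is a pushout --- is never actually proved in your proposal, and the tool you invoke for it cannot deliver it. Theorem \ref{cubo} takes as \emph{hypotheses} that both the top and bottom faces of the cube are pushouts and concludes only that the induced comparison morphism $\alpha\cup\beta$ is a cofibration; it is not a device for showing that a square is a pushout, nor for ``identifying'' an object with a stated pushout presentation. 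Both in your base case ($\Sigma_*^{u}$) and in your inductive step ($\Sigma_*^{u_{n*}}$) the identification of $P\{\kappa_*,u_{n*}\}$, formed in ${\bf C}^*$, with the presentation $P\{\rho^{n+1}C^{n+1}s,\ C^{n+1}j\cup C^nj'\cup\cdots\cup C^nj'\}$ is exactly the content to be established, and you defer it to ``routine bookkeeping'' justified by a theorem that does not apply.

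What is actually needed is a verification of the universal property of that presentation: given $F:C\nabla\to X$ and $\{G_{n+1},\dots,G_0\}:\Sigma^{i_n}\to X$ with $F\rho^{n+1}C^{n+1}s=\{G_{n+1}C^{n+1}j,G_nC^nj',\dots,G_0C^nj'\}$, one reads off the componentwise compatibilities $G_{n+1}C^{n+1}j=F\rho^{n+1}C^{n+1}s$ and $G_rC^nj'=F\rho^nC^ns$, uses Proposition \ref{copun} to produce $\{F,G_{n+1}\}:C_*^{n+1}B\to X$ and $\{F,G_r\}:C_*^nA\to X$, and then uses Corollary \ref{morpun} (namely $C_*^r\kappa_*=1\cup C^r\kappa$ and $C_*^nu=1\cup C^ni$) to check these components agree on the overlaps of the iterated pushout $\Sigma_*^{u_{n*}}=P\{\kappa_*,u_{n*}\}$, yielding the unique factorization $\{F,G_{n+1},\dots,G_0\}$. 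This is the step your proposal is missing; it is also a prerequisite for your interchange $\{1\cup Ci_n,1\cup\kappa\}=1\cup\{Ci_n,\kappa\}$, which silently presupposes the very pushout presentation in question, so without it the inductive step is circular. Replacing the appeal to Theorem \ref{cubo} by this explicit universal-property argument (or by an honest pasting-of-pushouts lemma applied to the composite rectangles of Proposition \ref{copun}) would repair the proof.
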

\begin{proof}
\begin{enumerate}
\item[] {\it Commutativity:} It is enough to observe that
        $\rho^{n+1}C^{n+1}s\cup \rho^nC^ns \cup \stackrel{(n+1}{........}\cup
        \rho^nC^ns=\rho^{n+1}C^{n+1}s:\Sigma^{(1_S)_n} = C^{n+1}S\to P\{
        1_{C\nabla}^n,1_{C\nabla}\} = C\nabla$.
\item[] {\it Pushout property:} Given $F=\{ F,...,F\}:C\nabla = P\{
        1^n_{C\nabla},1_{C\nabla}\} \to X$ and $\{G_{n+1},...,G_0\}
        :\Sigma^{i_n}\to X$ such that $F\rho^{n+1}C^{n+1}s =
        \{F\rho^{n+1}C^{n+1}s,
        \linebreak F\rho^{n}C^{n}s,\stackrel{(n+1}{........},
        F\rho^{n}C^{n}s\} = \{ G_{n+1}C^{n+1}j,G_nC^nj',...,G_0C^nj'\}$.
        Then $G_{n+1}C^{n+1}j=F\rho^{n+1}C^{n+1}s$  and $G_{i}C^{n}j' =
        F\rho^{n}C^{n}s$, $0\leq i\leq n$. By Proposition \ref{copun}
        there are $\{ F,G_{n+1}\}:C_*^{n+1}B\to X$ and $\{ F,G_i\}:
        C_*^nA\to X$. By Corollary \ref{morpun} $C_*^r\kappa_* = 1\cup
        C^r\kappa$ and $C_*^nu=1\cup C^ni$, therefore the existence of the
        morphism $\{G_{n+1},...,G_0\}$ allows to define the unique
        morphism $\{\{ F,G_{n+1}\} ,...,\{ F,G_0\}\} = \{ F,G_{n+1},
        ...,G_0\}:\Sigma_*^{u_{n*}}=P\{ \rho^{n+1}C^{n+1} s,C^{n+1}j
        \cup C^nj'\cup \stackrel{(n+1}{........}\cup C^nj'\}\to {\rm
        X}$.
\item[]
        \vspace{-20pt}
\begin{eqnarray*}
u_{n+1*} & = & \{ C_*^{n+1}(1\cup i),C_*^n \kappa_*,...,\kappa_*\} =     \\
 & = & \{ 1\cup C^{n+1}i,1\cup C^n \kappa,...,1\cup \kappa\} =           \\
 & = & \{\{ \stackrel{(n+1}{\overline{Ca}},\overline{\rho}C\overline{\rho}...
       C^n\overline{\rho}C^{n+1}\overline{s}C^{n+1}i\},
       \{\stackrel{(n+1}{\overline{Ca}},\overline{\rho}C\overline{\rho}...
       C^{n}\overline{\rho}C^{n+1}\overline{s}C^n\kappa\},             \\
 &  & ....,\{\stackrel{(n+1}{\overline{Ca}},\overline{\rho}C\overline{\rho}...
       C^{n}\overline{\rho}C^{n+1}\overline{s} \kappa\}\} \approx      \\
 & \approx & \{ \stackrel{(n+1}{\overline{Ca}},\overline{\rho}
             C\overline{\rho}...             C^n\overline{\rho}C^{n+1}
             \overline{s}\{ C^{n+1}i,C^n\kappa,...,\kappa\}\} =          \\
 & = & \{ \stackrel{(n+1}{\overline{Ca}},\overline{\rho}C\overline{\rho}...
       C^n\overline{\rho}C^{n+1}\overline{s}i_{n+1}\} =1\cup i_{n+1}
\end{eqnarray*}
\end{enumerate}

\end{proof}

Based objects are necessary to obtain pointed homotopy groups.
Given a pushout associated to a pointed object $(A,a)$ with
pointed base morphism $\alpha$, then $\alpha\overline{s}:T\to
C\nabla$ verifies $\alpha\overline{s}j=s$. Conversely, every
morphism $\alpha :T\to C\nabla$ such that $\alpha j=s$ gives a
pointed base morphism $1\cup \alpha :(A,a,\alpha)=P\{ s,j\}\to
C\nabla = P\{ 1,1\}$. $(A,a,\alpha )=P\{ s,j\}$ will denote a base
pointed object, where $\alpha :T\to C\nabla$.

Observe that 0-morphism is  $\{ x,x\alpha\} :(A,a,\alpha )=P\{
s,j\}\to (X,x)$. Moreover a pushout cofibration $u=1\cup
i:(B,b,\beta )=P\{ s,j\} \cof (A,a,\alpha )=P\{ s,j'\}$ is based
if and only if $\alpha i=\beta$.

\begin{thm}
Given a based pushout cofibration $u=1\cup i:(B,b,\beta ) = P\{
\rho (Cs),j\} \cof C_*(A,a,\alpha )=P\{ \rho (Cs),Cj'\}$, then
$$[(C_*A,\overline{Ca}),(X,x)]^{(u)}\cong [CT',X]^{x\beta (i)}$$
\end{thm}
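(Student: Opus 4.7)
The plan is to unfold both sides using the pushout decomposition $C_*A=P\{\rho Cs,Cj'\}$ from Proposition \ref{copun}, and then exhibit an explicit bijection that respects the respective relative homotopy relations.

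First I would establish the morphism-level bijection. By the universal property of the pushout $C_*A=P\{\rho Cs,Cj'\}$, a morphism $f:(C_*A,\overline{Ca})\to (X,x)$ in $\mathbf{C}^*$ amounts to a pair $(h,g)$ with $h:C\nabla\to X$ and $g:CT'\to X$ satisfying $gCj'=h\rho Cs$, i.e.\ $f=\{h,g\}$; pointedness $f\overline{Ca}=x$ forces $h=x$, so $f=\{x,g\}$. Writing $u=1\cup i=\{\overline{Ca},\overline{\rho Cs}^{C_*A}i\}$ and the zero morphism of $\mathbf{C}^*$ as $0=\{x,x\beta\}:(B,b,\beta)\to(X,x)$, the extension condition $fu=0$ collapses to $gi=x\beta$. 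Conversely, given $g:CT'\to X$ with $gi=x\beta$, the identity $\beta j=\rho Cs$ built into the based pushout structure of $(B,b,\beta)=P\{\rho Cs,j\}$, together with $Cj'=ij$, forces the compatibility $gCj'=gij=x\beta j=x\rho Cs$, so $f=\{x,g\}$ is a well-defined pointed morphism. Thus $f\leftrightarrow g$ is a bijection between the underlying sets $Hom((C_*A,\overline{Ca}),(X,x))^{(0)(u)}$ and $Hom(CT',X)^{x\beta(i)}$.

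Next I would check that this bijection descends to homotopy classes by translating a pointed relative homotopy in $\mathbf{C}^*$ into a relative homotopy in $\mathbf{C}$. An $F:f_0\simeq f_1$ rel $u$ in $\mathbf{C}^*$ is an extension $F:C_*^2(A,a,\alpha)\to(X,x)$ of $\{f_0\rho_*C_*u,f_1\}$ relative to $u_{1*}$. Using Proposition \ref{copun} and Corollary \ref{morpun}, one has $C_*^2A=P\{\rho^2C^2s,C^2j'\}$, $\rho_*=1\cup\rho$ and $C_*u=1\cup Ci$, while by the preceding theorem $u_{1*}=1\cup i_1:\Sigma_*^{u}\cof C_*^2A$. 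Applying the universal property of $C_*^2A$ once more, the pointed $F$ is of the form $\{x,H\}$ for a unique $H:C^2T'\to X$, and tracking the extension condition through these identifications gives $Hi_1=\{g_0\rho Ci,g_1\}$, i.e.\ $H$ is a relative homotopy $g_0\simeq g_1$ rel $i$ in $\mathbf{C}$. The argument is reversible, so the bijection of Step 1 descends to the quotient sets.

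The main obstacle is precisely this second step: one must verify that, when the data defining a pointed relative homotopy in $\mathbf{C}^*$ are rewritten via the pushout decompositions above, no extra conditions are introduced by base-preservation, and the ``free'' component is exactly an extension of $\{g_0\rho Ci,g_1\}$ relative to $i_1$. This amounts to a diagram-chase using the equalities $\beta j=\rho Cs$, $Cj'=ij$ and the explicit form $u_{1*}=1\cup i_1$, closely analogous to the verifications carried out in the proof of the preceding theorem. Once this translation is in place, the isomorphism $[(C_*A,\overline{Ca}),(X,x)]^{(u)}\cong [CT',X]^{x\beta(i)}$ follows immediately.
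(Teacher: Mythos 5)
Your proposal is correct and takes essentially the same route as the paper: the bijection is $g\leftrightarrow\{x,g\}$ coming from the pushout decomposition $C_*A=P\{\rho (Cs),Cj'\}$ together with $gCj'=gij=x\beta j=x\rho (Cs)$, and it descends to homotopy classes via $G\mapsto\{x,G\}$ in one direction and restriction along $\overline{\rho^2C^2s}$ (your $H=F\overline{\rho^2C^2s}$) in the other. The diagram-chase you single out as the main obstacle is left equally implicit in the paper's own proof.
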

\begin{proof}
Given $g:CT'\to X$, $gCj'=gij=x\beta j=x\rho (Cs)$. Hence there is
$\{ x,g\}:C_*(A,a) = P\{ \rho (Cs),Cj'\}\to (X,x)$. The bijection
$\theta: [CT',X]^{x\beta (i)}\to
[(C_*A,\overline{Ca}),(X,x)]^{(u)}$ is defined by $\theta
([g])=[\{ x,g\} ]$.

\begin{enumerate}
\item[] {\it $\theta$ is well defined} since $G:g_0\simeq g_1$ rel.
        $i$ implies that $\{ x,G\}:C_*^2(A,a)=P\{ \rho^2C^2s,C^2j'\}\to
        (X,x)$ verifies $\{ x,G\}:\{x ,g_0\}\simeq \{ x,g_1\}$ rel. $u$.
\item[] Clearly {\it $\theta$ is suprajective.}
\item[] {\it $\theta$ is injective} since if $G:\{ x,g_0\}\simeq \{
x,g_1\}$ rel. $u$ then $G\overline{\rho^2C^2s}:g_0\simeq g_1$ rel.
$i$.
\end{enumerate}
\end{proof}

\begin{cor}\label{grupar}
Given a based pushout cofibration $u=1\cup i:(B,b,\beta )=P\{
s,j\}\cof (A,a,\alpha)=P\{ s,j'\}$, then
$$\pi_{n*}^u((X,x))\cong \pi_n^i(X,x\rho (C\alpha ))$$
\end{cor}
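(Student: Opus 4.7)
My plan is to apply the preceding theorem to the based pushout cofibration $u_{n*}$ identified in the earlier theorem characterizing iterates. By the observation that $\pi_n^j(Z,h)=[C^nY,Z]^{h\rho^{n-1}j_n(j_n)}$ and the definition of $\pi_{n*}^u$,
\[
\pi_{n*}^u((X,x)) \;=\; [(C_*^n A, \overline{Ca}),(X,x)]^{0_*(u_{n*})},
\]
the homotopy classes rel $u_{n*}$ of extensions of the zero morphism $0_*$ in ${\bf C}^*$. The earlier theorem shows $u_{n*}=1\cup i_n$ is a based pushout cofibration, and its codomain can be viewed as $C_*(C_*^{n-1}(A,a,\alpha))$, where $C_*^{n-1}(A,a,\alpha)=P\{\rho^{n-1}C^{n-1}s,C^{n-1}j'\}$ carries the base $\rho^{n-1}C^{n-1}\alpha$. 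Applying the preceding theorem to this setup gives a bijection
\[
\pi_{n*}^u((X,x)) \;\cong\; [C^n T',X]^{x\beta_n(i_n)},
\]
where $\beta_n:\Sigma^{i_{n-1}}\to C\nabla$ is the base of the domain $\Sigma_*^{u_{(n-1)*}}$.

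Next I would identify $\beta_n$ explicitly. The requirement that $u_{n*}$ be \emph{based} forces $\beta_n=(\text{base of }C_*^n(A,a,\alpha))\circ i_n=\rho^nC^n\alpha\circ i_n$. An induction on $n$ using the cone axiom $\rho\rho=\rho C\rho$ and the naturality of $\rho$ shows $\rho^nC^n\alpha=\rho(C\alpha)\rho^{n-1}$ (the inductive step: $\rho^{n+1}C^{n+1}\alpha=\rho\cdot C\rho^n\cdot C^{n+1}\alpha=\rho\cdot C(\rho^nC^n\alpha)=\rho\cdot C(\rho(C\alpha)\rho^{n-1})=\rho(C\alpha)\rho^n$, using one instance of the cone axiom). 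Thus $x\beta_n=(x\rho(C\alpha))\rho^{n-1}i_n$ is precisely the boundary morphism appearing in $\pi_n^i(X,x\rho(C\alpha))=[C^nT',X]^{(x\rho(C\alpha))\rho^{n-1}i_n(i_n)}$, and the bijection above becomes the desired set-level isomorphism $\pi_{n*}^u((X,x))\cong\pi_n^i(X,x\rho(C\alpha))$.

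Finally, I would check that the bijection $[g]\mapsto[\{x,g\}]$ from the preceding theorem is a group homomorphism. The group operation in $\pi_{n*}^u$ is defined through the natural transformation $\mu_{*}$ for the cone $C_*$; since $C_*$ is built by pushout along $\rho$, the extension $\mu_{*\,u_{(n-1)*}}$ on $C_*^{n+1}A=P\{\rho^{n+1}C^{n+1}s,C^{n+1}j'\}$ acts by the identity on the $C\nabla$-component and coincides with $\mu_{i_n}$ on the $C^{n+1}T'$-component. Consequently $[\{x,F\}]\ast[\{x,G\}]=[\{x,F\ast G\}]$, so the bijection is an isomorphism of groups. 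The main obstacle will be the coordinatewise decomposition of $\mu_*$ and the naturality computation $\rho^nC^n\alpha=\rho(C\alpha)\rho^{n-1}$; both are routine consequences of the cone axioms once the pushout decompositions supplied by the preceding theorems are in hand.
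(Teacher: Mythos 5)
Your proposal is correct and follows essentially the same route as the paper: apply the preceding theorem to the iterate $u_{n*}=1\cup i_{n}$ viewed as a based pushout cofibration into $C_*\bigl(C_*^{n-1}(A,a,\alpha)\bigr)$, identify the resulting base morphism with $x\rho^{n}C^{n}\alpha\, i_{n}=\bigl(x\rho (C\alpha)\bigr)\rho^{n-1}i_{n}$ via naturality of $\rho$, and then upgrade the bijection to a group isomorphism by observing that the structure map $\mu_*$ decomposes as $1\cup\mu$ over the pushout presentation of $C_*^{n}(A,a)$. This matches the paper's proof step for step.
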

\begin{proof}
\begin{eqnarray*}
\pi_{n*}^u((X,x)) & =& [C^n_*(A,a),(X,x)]^{(u_{n*})} \cong     \\
  & \cong & [C^nT',X]^{x\{\rho^nC^n\beta,\rho^{n-1}C^{n-1}\alpha,...,
           \rho^{n-1}C^{n-1}\alpha\} (i_n)} =                   \\
  & = & [C^nT',X]^{x\rho^nC^n\alpha i_n(i_n)} =         \\
  & = & \pi_n^i(X,x\rho (C\alpha ))
\end{eqnarray*}

Given an extension $\mu$ of the morphism $\kappa\rho
(Ci_{n-1})\cup \kappa$ relative to the cofibration $i_n$, then
$\mu C^nj' = (C^nj'\cup C^nj')\kappa \rho$ and $\rho^nC^ns =
(\rho^nC^ns)\kappa \rho$. Hence there is $1\cup\mu:C^n_*(A,a)=P\{
\rho^nC^ns,C^nj'\} \longrightarrow (C_*P\{
u_{n-1*},u_{n-1*}\},\overline{u_{n-1*}}
\stackrel{(n-1}{\overline{Ca}}) = P\{ \rho^nC^ns,C^nj'\cup
C^nj'\}$ verifying
\begin{eqnarray*}
(1\cup\mu )u_{n*} & \approx & (1\cup\mu )(1\cup i_{n}) =      \\
 & = & 1\cup(\kappa\rho (Ci_{n-1})\cup \kappa )\approx                       \\
 & \approx & ((1\cup \kappa )(1\cup \rho )(1\cup Ci_{n-1}))\cup (1\cup \kappa ) =  \\
 & = & \kappa_*\rho_*Cu_{n-1*}\cup \kappa_*
\end{eqnarray*}
Therefore the bijection above is an isomorphism of groups.

\end{proof}

\begin{cor}\label{obj}
Given a based pointed object $(A,a,\alpha )$:
$$\pi_{n*}^{A}((X,x))\cong \pi_n^a(X,x\rho (C\alpha) )$$
\end{cor}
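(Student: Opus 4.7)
The plan is to deduce this corollary directly from Corollary \ref{grupar} by realising the initial morphism of the pointed category $\mathbf{C}^*$ as a based pushout cofibration via the trivial pushout decompositions. In $\mathbf{C}^*$ (Theorem \ref{punto}) the initial object is $(C\nabla, 1)$, so the initial morphism $*_{(A,a)}$ is the pointed cofibration $a:(C\nabla,1)\cof (A,a)$; together with the convention $\pi_{n*}^A((X,x))=\pi_{n*}^{*_{(A,a)}}((X,x))$, it suffices to realise $a$ as a based pushout cofibration and then invoke Corollary \ref{grupar}.

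To this end, take the trivial pushout decompositions $(C\nabla,1)=P\{1_{C\nabla},1_{C\nabla}\}$ and $(A,a)=P\{1_{C\nabla},a\}$; the defining equation $j'=ij$ forces $i=a$, and a short check using $P\{1,1\}=C\nabla$ shows that the pushout cofibration $u:=1\cup a$ coincides with the morphism $a$ itself. For the base structures take $\beta=1_{C\nabla}$ on the source and, on the target, the pointed base morphism $1\cup\alpha:A\to C\nabla$ coming from $(A,a,\alpha)$. The crucial observation is that $(1\cup\alpha)\cdot a=1_{C\nabla}$ is an automatic consequence of the definition of $\{1,\alpha\}$ on the pushout, so this morphism is a retraction of $a$ and serves as a legitimate base for the trivial decomposition of $(A,a)$. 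With these choices the based compatibility $\alpha\cdot i=\beta$ collapses to $\alpha\cdot a=1_{C\nabla}$, which indeed holds.

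Applying Corollary \ref{grupar} to $u=a$ and $i=a$ then yields
$$\pi_{n*}^A((X,x))=\pi_{n*}^u((X,x))\cong \pi_n^i(X,x\rho(C\alpha))=\pi_n^a(X,x\rho(C\alpha)),$$
which is the stated isomorphism. No significant obstacle is expected; the only care required is the notational overloading of $\alpha$, which simultaneously denotes both the abstract base morphism $T\to C\nabla$ of a decomposition and the pointed base morphism $A\to C\nabla$ of $(A,a,\alpha)$. In the trivial decomposition one has $T=A$ and the two coincide, so no genuine conflict arises.
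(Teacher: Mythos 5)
Your proposal is correct and is precisely the argument the paper intends (it states the corollary without proof as an immediate specialization of Corollary \ref{grupar}): you realise the initial morphism $a:(C\nabla,1)\cof(A,a)$ as the based pushout cofibration $1\cup a:P\{1,1\}\cof P\{1,a\}$ coming from the trivial decompositions, check $i=a$ and $\alpha a=1_{C\nabla}$, and apply Corollary \ref{grupar}. Your closing remark on the overloading of $\alpha$ (the decomposition datum $T\to C\nabla$ versus the pointed base morphism $1\cup\alpha:A\to C\nabla$, which coincide for the trivial decomposition) correctly resolves the only notational subtlety.
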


\begin{cor}
The pointed exact sequence relative to the based pushout
cofibration $u=1\cup i$ of the pointed pair $((X,x),(Y,y))$ is
isomorphic to the exact sequence relative to the cofibration $i$
of the pair $(X,Y)$ based on the morphism $x\rho (C\alpha )$
\end{cor}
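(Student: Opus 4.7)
The plan is to apply Corollary~\ref{grupar} termwise to the pointed exact sequence obtained from Theorem~\ref{sucex} in the pointed $C$-category ${\bf C}^*$ (whose pointed structure comes from Theorem~\ref{punto}), and then to verify that the resulting termwise isomorphisms intertwine the three connecting morphisms $f_*$, $j_n$, and $\delta_n$ with their counterparts in the generalized exact sequence for the pair $(X,Y)$ relative to $i$ based at $x\rho(C\alpha)$.

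The first step is to extend Corollary~\ref{grupar} from objects to pairs. A pointed pair $((X,x),(Y,y))$ is a pointed cofibration $f:(Y,y)\cof (X,x)$, and by definition $\pi_{(n+2)*}^u(((X,x),(Y,y)),h) = \pi_{(n+1)*}^{(C_*u,u)}(((X,x),(Y,y)),(fh\rho_*,h))$. Since $C_*u=1\cup Ci$ preserves the pushout-cofibration structure of $u$, and since (by items (a), (b) just before Proposition~\ref{theta}) homotopies of pairs in ${\bf C}^*$ correspond componentwise to homotopies of pairs in ${\bf C}$, the argument proving Corollary~\ref{grupar} adapts to yield $\pi_{n*}^u(((X,x),(Y,y)),h) \cong \pi_n^i((X,Y),x\rho(C\alpha))$. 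Combined with Corollary~\ref{grupar} for the object entries $(X,x)$ and $(Y,y)$, this supplies vertical isomorphisms at every term of the two long exact sequences.

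It then remains to check commutativity of three kinds of ladder squares. The $f_*$ square commutes because in both sequences $f_*$ acts by post-composition with $f$ and the isomorphism of Corollary~\ref{grupar} is natural in the target object. The $\delta_n$ square commutes because in both sequences $\delta_n[(F,G)]=[G]$, and the bijection identifies the second component of a pointed pair with the corresponding component in ${\bf C}$. The substantive square is the one for $j_n=(1,1)^*\theta_n$: here one must track how $\theta_n$ (Proposition~\ref{theta}) and the induced map $(1,1)^*$ (Proposition~\ref{pifder}) interact with the pushout decompositions of Corollary~\ref{morpun}.

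I expect the $j_n$ square to be the main obstacle. The key input is the identity $(1\cup\mu)u_{n*}\approx \kappa_*\rho_*Cu_{n-1*}\cup\kappa_*$ derived inside the proof of Corollary~\ref{grupar}: it shows that the pushout-produced extension $\mu$ used to construct the groupoid ${\bf H}_{u_{n*}}(X,x)$ in ${\bf C}^*$ matches, modulo the pointed identifications, the extension $\mu$ used to construct ${\bf H}_{i_n}(X)$ in ${\bf C}$. Granting this, the $\theta_n$ and $(1,1)^*$ pieces are compatible by naturality and the curly-brace identifications from Corollary~\ref{morpun}, so the three ladder squares assemble into an isomorphism of long exact sequences.
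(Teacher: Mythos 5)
Your proposal is correct and follows the route the paper evidently intends (it states this corollary without proof, as an immediate consequence of Corollary~\ref{grupar}): apply the exact sequence of Theorem~\ref{sucex} in the pointed $C$-category ${\bf C}^*$, identify each term via Corollary~\ref{grupar} and its pair analogue, and check that the termwise isomorphisms commute with $f_*$, $j_n$ and $\delta_n$, the only delicate point being the compatibility of the extensions $\mu$ encoded in $(1\cup\mu)u_{n*}\approx \kappa_*\rho_*Cu_{n-1*}\cup\kappa_*$, which you correctly single out.
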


The isomorphism of groups given in Corollary \ref{grupar} let one to
extend the definition of pointed homotopy groups to pointed objects
$(X,x)$ where $x$ be not a cofibration:
$$\pi_{n*}^u((X,x)) = \pi_n^i(X,x\rho (C\alpha) )$$

In this way, although ${\bf C}^{C\nabla}$ has not, in general, a
natural cone in the sense described in this paper, it is possible to
obtain homotopy groups of its objects, as it occurs with topological
spaces and pointed topological spaces.

Next, taking as example topological spheres, spherical objects are defined.

\begin{dfn}
The $0$-sphere of ${\bf C}^{C\nabla}$ is $S^0 = P\{
\kappa_{\nabla} ,\kappa_{\nabla}\}$.
\end{dfn}

Observe that $(S^0,\overline{\kappa},\{ 1,1\})$ is a based pointed
object.

\begin{dfn}
The $n$-sphere of ${\bf C}^{C\nabla}$ is $S^n =
S^n_*(S^0,\overline{\kappa})$. \cite{D-}
%({\bf Habr  que poner la def. de suspensi¢n en los preliminares})
\end{dfn}

\begin{remark}
Note that, for $n\geq 2$
\begin{eqnarray*}
\pi_{n*}^{S^0}((X,x)) & = & [S^n,(X,x)] =
                         (\mbox{Corollary \ref{obj}}) =  \\
  & = & \pi_n^{\overline{\kappa}}(X,\{ x\rho,x\rho\}) \cong
        (\mbox{pushout of definition of $S^0$}) \cong   \\
  & \cong & \pi_n^{\kappa_{\nabla}}(X,x)
\end{eqnarray*}

Hence one can to extend the definition of $\pi_{n*}^{S^0}((X,x))$ for
$n=1$ by:
$$\pi_{1*}^{S^0}((X,x)) = \pi_1^{\kappa_{\nabla}}(X,x)$$
\end{remark}

\begin{dfn}
$\pi_{n*}^{S^0}((X,x))$ are denominated Spherical Homotopy Groups of
the pointed object $(X,x)$.
\end{dfn}

\begin{remark}
If {\bf C} is a pointed category with a natural cone, then
$\pi_{n*}^{S^0}((X,x)) = \pi_{n+1}^{\nabla}(X,x)$.

If {\bf C} has initial object $\emptyset$ and
$C\emptyset\neq\emptyset$, then $\pi_{n+1}^{\emptyset}(X,x)=
\pi_{n*}^{S^0}((X,x))$ are denominated Standard Spherical Homotopy
Groups of the pointed object $(X,x)$. $\pi_{1*}^{S^0}((X,x)) =
\pi_{2}^{\emptyset}(X,x)$ is also denominated Fundamental Group.
\end{remark}

In the category of topological spaces, the standard spherical
homotopy groups of a pointed topological space are the classical
homotopy groups.


\begin{thebibliography}{99}

\bibitem[B]{B} {\sc Baues, H.J.} Algebraic homotopy. Cambridge
University Press (1989).

\bibitem[B3]{B3} {\sc Baues, H.J. and Quintero, A.} On the locally
finite chain algebra of a proper homotopy type. Bull. Belg. Math.
Soc. Simon Stevin 3 (1996), no. 2, 161--175.

\bibitem[Br]{Br} {\sc Brown, E. M.} Proper homotopy theory in simplicial
complexes. Lect. Notes in Math. {\bf 375}, (1975).

\bibitem[Bro]{Bro} {\sc Brown, K.S.} Abstract homotopy theory and
generalized sheaf cohomology. Trans. AMS, {\bf 186} (1973), 419-58.

\bibitem[C]{C} {\sc Z. ${\check {\rm C}}$erin.} On various relative
proper homotopy groups. {\em Tsukuba J. Math.} {\bf 4}
(1980), 177-202.

\bibitem[D-]{D-} {\sc D\'{\i}az, F. J. and Rodr\'{\i}guez-Mach\'{\i}n, S.} Category with a
Natural Cone. {\em CEJM} {\bf 4} (1) (2006) 5-33.

\bibitem[D-1]{D-1} {\sc D\'{\i}az, F.J.; Remedios, J. and Rodr\'{\i}guez-Mach\'{\i}n, S}: Generalized
homotopy in $C$-categories, {\em Extracta Math.}, {\bf 16}, n. 3,
(2001), pp 393-403.

\bibitem[G-]{G-} {\sc Garc\'{\i}a-Calcines, J.; Garc\'{\i}a-Pinillos, M.;
Hern\'{a}ndez-Paricio, L.J.} A closed simplicial model category for
proper homotopy and shape theories. B. Aust. Math. Soc., 57
(1998), 221-242.

\bibitem[H]{H} {\sc Heller, A.} Stable Homotopy Categories. B.
Am. Math. Soc., {\bf 74} (1968), 28-63.

\bibitem[Hi]{Hi} {\sc Hilton, P.J.} Homotopy theory and duality.
Nelson Gordon and Breach (1965).

\bibitem[Hu]{Hu} {\sc Huber, P.J.} Homotopy theory in general categories. Math.
Ann., 144 (1961), 361-385.

\bibitem[K]{K} {\sc Kamps, K.H.} Note on normal sequences of chain
complexes. Colloq. Math., 39, 2 (1978), 225-227.

\bibitem[Ka]{Ka} {\sc Kan, D.M.} Abstract homotopy I,II. Proc. Nat.
Acad. Sci. USA, {\bf 41} (1955), 1092-1096.

\bibitem[Kl]{Kl} {\sc Kleisli, H.} Homotopy theory in Abelian Categories. Can.
J. Math., 14 (1962), 139-169.

\bibitem[M]{M} {\sc Minian, E. G.} Generalized cofibration categories
and global actions. K-Theory 20 (2000), 37-95. 



\bibitem[P-]{P-} {\sc Padr\'{o}n, E. and Rodr\'{\i}guez-Mach\'{\i}n, S.} Model additive
categories. Suppl Rendiconti Circolo Mat Palermo. Serie II. 24
(1990), 465-474.

\bibitem[Q]{Q} {\sc Quillen, D.G.} Homotopical Algebra. Lecture
Notes in Math 43, Springer-Verlag (1967).

\bibitem[S]{S} {\sc Seebach, J.A. JR.} Injectives and homotopy.
Illinois J. Math. {\bf 16} (1972) 446-453.
\end{thebibliography}
\end{document}